\newcommand\AMSname{AMS subject classifications}
\newcommand\keywordsname{Key words}
\newcommand{\newcomment}[1]{}
\newenvironment{@abssec}[1]{%
	\if@twocolumn
	\section*{#1}%
	\else
	\vspace{.05in}\footnotesize
	\parindent .2in
	{\upshape\bfseries #1. }\ignorespaces
	\fi}
{\if@twocolumn\else\par\vspace{.1in}\fi}
\newenvironment{AMS}{\begin{@abssec}{\AMSname}}{\end{@abssec}}
\newtheorem{example}{Example}
\newtheorem{definition}{Definition}[section]
\numberwithin{equation}{section}
\numberwithin{figure}{section}
\numberwithin{table}{section}
\newtheorem{theorem}{Theorem}[section]
\newtheorem{lemma}{Lemma}[section]
\newtheorem{proposition}{Proposition}[section]
\newcommand{\email}[1]{\protect\href{mailto:#1}{#1}}
\newenvironment{keywords}{\begin{@abssec}{\keywordsname}}{\end{@abssec}}
\newcommand{\mat}[1]{\left[ \begin{array}{#1} }
\newcommand{\rix}{\end{array} \right]}
\newcommand{\twobytwo}[4]{
       \left[ \begin{array}{cc}
        #1 & #2  \\
        #3 & #4
           \end{array} \right] }
\def\bbordermatrix#1{\begingroup \m@th
	\@tempdima 4.75\p@
	\setbox\z@\vbox{%
		\def\cr{\crcr\noalign{\kern2\p@\global\let\cr\endline}}%
		\ialign{$##$\hfil\kern2\p@\kern\@tempdima&\thinspace\hfil$##$\hfil
			&&\quad\hfil$##$\hfil\crcr
			\omit\strut\hfil\crcr\noalign{\kern-\baselineskip}%
			#1\crcr\omit\strut\cr}}%
	\setbox\tw@\vbox{\unvcopy\z@\global\setbox\@ne\lastbox}%
	\setbox\tw@\hbox{\unhbox\@ne\unskip\global\setbox\@ne\lastbox}%
	\setbox\tw@\hbox{$\kern\wd\@ne\kern-\@tempdima\left[\kern-\wd\@ne
		\global\setbox\@ne\vbox{\box\@ne\kern2\p@}%
		\vcenter{\kern-\ht\@ne\unvbox\z@\kern-\baselineskip}\,\right]$}%
	\null\;\vbox{\kern\ht\@ne\box\tw@}\endgroup}
\title{2D Eigenvalue Problem II: Rayleigh Quotient Iteration and Applications~\thanks{%Submitted to the editors \today.
version \today.
%\funding{This work was funded by the Fog Research Institute 
%under contract no.~FRI-454.}
}
}
\author{
Tianyi Lu\thanks{School of Mathematical Sciences, 
Fudan University, Shanghai 200433, China
(\email{tylu17@fudan.edu.cn}, \email{yfsu@fudan.edu.cn}). 
%The work of the first and second author was supported
%by China NSF Project 91730303 and 11971122.
}
\and Yangfeng Su\footnotemark[2]
\and Zhaojun Bai\thanks{Department of Computer Science
and Department of Mathematics, University of California,
Davis, CA 95616, USA (\email{zbai@ucdavis.edu})}
}
\DeclareMathOperator{\diag}{diag}
\DeclareMathOperator{\imag}{Imag}
\DeclareMathOperator{\Span}{span}
\DeclareMathOperator{\sign}{sign}
\DeclareMathOperator{\dist}{dist}
\DeclareMathOperator{\vvec}{vec}
\begin{document}
\maketitle

% REQUIRED
\begin{abstract}
In Part I of this paper, we 
introduced a 2D eigenvalue problem (2DEVP) and presented
theoretical results of the 2DEVP and its intrinsic connetion with the 
eigenvalue optimizations. 
In this part, we devise a Rayleigh quotient iteration (RQI)-like 
algorithm, 2DRQI in short, 
for computing a 2D-eigentriplet of the 2DEVP. 
The 2DRQI performs $2\times$ to $8\times$  
faster than the existing algorithms for large scale eigenvalue 
optimizations arising from the minmax of Rayleigh quotients and 
the distance to instability of a stable matrix. 
\end{abstract}

% REQUIRED
\begin{keywords}
eigenvalue problem; 
Rayleigh quotient; 
Rayleigh quotient iteration; 
distance to instability.
\end{keywords}

% REQUIRED
\begin{AMS}
65F15, 65K10 
\end{AMS}

%\clearpage
%\tableofcontents
%\clearpage

\section{Introduction}\label{eq:intro}

%\paragraph{2DEVP definition.}
This is the second part of the paper in the sequel on 
the 2D eigenvalue problem (2DEVP), namely computing 
scalars $\mu, \lambda \in \mathbb{R}$ and nonzero vector 
$x \in \mathbb{C}^n$ such that
\begin{subequations}\label{2deig}
\begin{empheq}[left={}\empheqlbrace]{alignat=2}
(A-\mu C) x & = \lambda x,  \label{eq:1a} \\
x^HCx & = 0, \label{eq:1b}   \\
x^Hx & = 1,    \label{eq:1c}
\end{empheq}
\end{subequations}
where $A, C \in \mathbb{C}^{n\times n}$ are 
given Hermitian matrices and $C$ is indefinite. 
The pair $(\mu, \lambda)$ is called a \emph{2D-eigenvalue},
$x$ is called the corresponding \emph{2D-eigenvector}, and
the triplet $(\mu,\lambda,x)$ is called a \emph{2D-eigentriplet}.

In Part I \cite{2DEVPI}, 
we presented the theory of the 2DEVP~\eqref{2deig}, 
such as association with the parameter eigenvalue problem 
and existence and variational characterization of 2D-eigenvalues.
We revealed that the 2DEVP has intrinsic relation with 
the problem of eigenvalue optimization.
Specifically, the equation \eqref{eq:1a} is a parameter eigenvalue problem
of $H(\mu) = A - \mu C$. Since $A$ and $C$ are Hermitian,
$H(\mu)$ has $n$ real eigenvalues 
$\lambda_1(\mu), \lambda_2(\mu), \ldots, \lambda_n(\mu)$
for any $\mu\in\mathbb{R}$.
Suppose these eigenvalues are sorted such that
$\lambda_1(\mu) \ge \lambda_2(\mu) \ge \cdots \ge \lambda_n(\mu)$, 
then equation~\eqref{eq:1b}
is a necessary condition for (local or global) maxima or minima of
$\lambda_i(\mu)$.
%if one wants to optimize an eigenvalue $\lambda_i(\mu)$ 
%with respect to $\mu$:
%\begin{equation} \label{eq:eigopt0}
%\max\limits_{\mu\in\mathbb{R}}\lambda_{i}(\mu) \equiv
%\lambda_{i}(A-\mu C),
%\end{equation}

In this paper, we focus on numerical algorithms for solving
the 2DEVP \eqref{2deig}.  Rayleigh quotient iteration (RQI) is 
a classical and efficient algorithm for 
computing an eigenpair of an Hermitian matrix, 
see \cite{Parlett1987Symmetric,Tapia2018} and references therein. 
The RQI is locally cubically convergent, i.e., the number of 
correct digits triples at each iteration once the error 
is small enough and the eigenvalue is 
simple~\cite[Theorem~5.9]{demmel1997applied}.
In this paper, we devise an RQI-like algorithm called 2DRQI 
for solving the 2DEVP \eqref{2deig}.
One of main advantages of the 2DRQI is that the 
computational kernel of the 2DRQI is 
a linear systems of equation, similar to the classical RQI. 
Therefore, the 2DRQI is capable to solve 
large scale 2DEVP by exploiting the structure and sparsity of
matrices $A$ and $C$. 

As a part of main contributions of this part, the 2DRQI 
is further developed for applications in two eigenvalue 
optimization problems, namely  
finding the minmax of two Rayleigh quotients and 
computing the distance to instability (DTI) of a stable matrix. 
We will demonstrate the algorithmic advantages of treating
these eigenvalue optimizations through the 2DEVP and the 2DRQI, such
as introducing the notion of the backward error of
a computed DTI for the first time and 
the significant reduction ($2\times$ to $8\times$ speedups)  
in computing time comparing with the existing algorithms.  

In the third part of this paper, we will provide
a rigorous convergence analysis of the proposed 2DRQI, 
and prove that the 2DRQI is locally quadratically convergent 
under some mild assumptions. 

The rest of this paper is organized as follows.
In Section~\ref{sec:2drq}, we will introduce 
concept of 2D Rayleigh quotients (2DRQ) and Jacobian of the 2DEVP, 
and present the approximation properties of the 2DRQ. 
In Section~\ref{sec:rqi}, we derive a 2D Rayleigh quotient iteration 
(2DRQI). The backward error analysis of the 2DEVP for an approximate
2D-eigentriplet is in Section~\ref{sec:backerr}. 
Section~\ref{sec:apps} discusses the applications of the
2DRQI for finding the minmax of two Rayleigh quotients and 
computing the distance to instability (DTI) of a stable matrix. 
In Section~\ref{sec:Experiments}, we present numerical examples
to illustrate the convergence behavior of the 2DRQI and 
demonstrate its efficiency for the applications.
Concluding remarks are in Section~\ref{sec:conclusion}. 

%\Red{we will see whether we need to have a notation paragraph later.} 
%\paragraph{Notation.}
%\Blue{
%Throughout the paper, we use the following notational convention. 
%Specifically, we say $x$ is a unit vector $x$ if $\|x\|_2 = 1$. 
%We use $a_{ij}$ to denote the $(i,j)$ element of a matrix $A$, and 
%$\lambda_{\min}(A)$ to denote the minimum eigenvalue of a Hermitian matrix $A$.
%For a diagonal matrix $C$, we use $c_i$ to denote its $i$-th diagonal element. 
%For the matrix $A_k$ and diagonal matrix $C_k$ in a iterative process, 
%where $k$ means the $k$-th iteration, we use $a_{ij,k}$, 
%$c_{i,k}$ to denote their elements and diagonal elements, respectively.  
%$\Diag(c_1, c_2)$ denotes $\left[\begin{smallmatrix}
%c_1&\\
%&c_2
%\end{smallmatrix}\right]$.
%We define $\dist(x,\Span\{V\})=\min\limits_{y\in\Span\{V\} }\|x-y\|_2$.  
%In particular, for a unit vector $y$, we define $\dist(x,y) =\min\limits_{|\gamma|=1}\|x-\gamma y\|_2 =\|(I- y y^H) x\|_2$.
%Furthermore, $\|\cdot\|_p$ is used to denote the $p$-norm of a matrix, 
%and in particular, when $p = 2$, we use $\|\cdot \|$ to 
%denote $\|\cdot\|_2$ for sake of convenience.}
%... \Red{need to collect here!} 
%
%--------------
%\newpage 
\section{2D Rayleigh quotient} \label{sec:2drq}

In this section, we first introduce the concepts of
Rayleigh quotient and Ritz values for the 2DEVP~\eqref{2deig},
and then reveal their approximation property to 2D-eigentriplets.

\begin{definition}\label{def:2DRQ}
Given an $n\times n$ Hermitian matrix pair $(A,C)$ and
an $n\times p$ orthonormal matrix $V$, 
the $p\times p$ matrix pair $(V^HAV,V^HCV)$ is 
called a {\em 2D Rayleigh quotient (2DRQ)}. 

Let $(\nu,\theta,z)$ be a 2D-eigentriplet of
the 2DRQ $(V^HAV,V^HCV)$ when $V^HCV$ is indefinite, 
i.e.,
\begin{subequations}\label{eq:reduced2devp}
\begin{empheq}[left={}\empheqlbrace]{alignat=2}
\Big( (V^HAV) - \nu (V^HCV)\Big)z & = \theta z, \label{eq:red2devp1} \\
z^H(V^HCV)z & = 0,  \label{eq:red2devp2}\\
z^Hz & = 1, \label{eq:red2devp3}
\end{empheq}
\end{subequations}
then $(\nu,\theta)$ is called a {\em 2D Ritz value}, 
$Vz$ a {\em 2D Ritz vector}, 
and $(\nu,\theta,Vz)$ a {\em 2D Ritz triplet}.
\end{definition}

The pair $(V^HAV,V^HCV)$ is called a 2DRQ for two reasons. 
First, it is analogous to the definition~\cite[p.\,288]{Parlett1987Symmetric} 
of the RQ for one matrix.  Second, in Section~\ref{subsec:outline},
we will see that when $C=0$, a Rayleigh quotient iteration (RQI) like 
method to solve the 2DEVP~\eqref{2deig} degenerates to the well-known
RQI for an eigenpair of a Hermitian matrix~\cite[Sec.~4.6]{Parlett1987Symmetric}
and \cite{Tapia2018}. 

The 2DEVP \eqref{2deig} can be formulated as the problem of finding 
the root of the following system of nonlinear equations
\[
F(\mu,\lambda,x) \equiv
\left[ \begin{array}{r}
	Ax-\mu Cx-\lambda x  \\
	-x^H C x/2  \\
	-(x^H x - 1)/2 \\
\end{array} \right]  = 0.
\]
When $\mu, \lambda$ and $x$ are real, the Jacobian of the function $F$ 
is well defined, see e.g. \cite[p.65]{Kelley1995iterative}.
When $x$ is complex, the second and third elements of $F$ 
are not differentiable due to the violation of Cauchy-Riemann 
conditions~\cite{Knopp1990theoryI}. 
In this case we have the following natural extension of the Jacobian of 
the nonlinear function $F$.

\begin{definition} \label{def:jacobian} 
The {\em Jacobian} of $F$ (and the 2DEVP) is defined as 
\begin{equation} \label{eq:jacobi}
J(\mu,\lambda,x)=
\left[
\begin{array}{c|cc}
A-\mu C-\lambda I & -Cx &-x \\
\hline
-x^HC & 0 & 0\\
-x^H & 0 & 0
\end{array}
\right].
\end{equation}
\end{definition} 

%\paragraph{Approximation property of 2D Ritz triplets.} 

For an $n\times 2$ orthonormal matrix $V$ of 
certain properties, the following theorem shows that 
if a 2D-eigenvector is near 
the subspace spanned by $V$, then 
the 2D Ritz triplet induced by $V$ will contain
a good approximation to a 2D-eigentriplet. A proof of the theorem will be provided in \cite{2DEVPIII}.

\begin{theorem}\label{Thm2DRQI} 
Let $(\mu_*,\lambda_*,x_*)$ be a 2D-eigentriplet of $(A,C)$. 
For any $\gamma > 0$, 
denote $\mathcal{V}_{\gamma}$ as the set of 
$n\times2$ orthonormal matrices $V$ satisfying 
$V^HCV$ is diagonal, $\det(V^HCV) \leq -\gamma$, and
$|(V^HAV)_{12}| \geq \gamma$. 
Then there exists constants $\alpha_1,\alpha_2,\alpha_3$ only depending on $(A, C, \mu_*, \lambda_*,\gamma)$, such that for any $V\in\mathcal{V}_{\gamma}$, let 
$\epsilon = \dist(x_*,\Span\{V\})\equiv\min\limits_{v\in\Span\{V\} }\|x-v\|_2$ and assume $\epsilon<1$, there exists a 2D Ritz triplet $(\nu,\theta, Vz)$ that satisfies 
\begin{equation}\label{eq:appealing}
|\nu-\mu_*| \leq \alpha_1 \epsilon,  \quad
|\theta -\lambda_*| \leq \alpha_2 \epsilon^2  
\quad \mbox{and} \quad
\|Vz-x_*\| \leq \alpha_3 \epsilon.
\end{equation}
\end{theorem}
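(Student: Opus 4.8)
The plan is to reduce the claim to an analysis of the $2\times2$ reduced 2DEVP~\eqref{eq:reduced2devp}, which admits an essentially closed-form solution, and then to transfer the estimates back. Throughout, ``$O(\cdot)$'' hides constants depending only on $\|A\|,\|C\|,|\mu_*|,|\lambda_*|$ and $\gamma$, and these ultimately produce $\alpha_1,\alpha_2,\alpha_3$. If $\epsilon$ is bounded away from $1$, say $\epsilon\ge\tfrac12$, all three inequalities in~\eqref{eq:appealing} hold trivially once the $\alpha_i$ are large enough: $V^HCV$ is indefinite so~\eqref{eq:reduced2devp} has a 2D-eigentriplet, its $\nu,\theta$ are bounded in the data and $\gamma$, and $\|Vz-x_*\|\le2$. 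So assume $\epsilon<\tfrac12$. Write $x_*=Vw+e$ with $w=V^Hx_*$, $e=(I-VV^H)x_*$, so $\|e\|=\epsilon$, $\|w\|^2=1-\epsilon^2$, and put $\hat w=w/\|w\|$. Left-multiplying $(A-\mu_*C)x_*=\lambda_*x_*$ by $V^H$ and dividing by $\|w\|$ gives $\big((V^HAV)-\mu_*(V^HCV)\big)\hat w=\lambda_*\hat w+O(\epsilon)$, and expanding $0=x_*^HCx_*=(Vw+e)^HC(Vw+e)$ gives $\hat w^H(V^HCV)\hat w=O(\epsilon)$. Thus $\hat w$ is a unit vector solving~\eqref{eq:reduced2devp} with parameters $(\mu_*,\lambda_*)$ up to $O(\epsilon)$ residuals in both the eigen-equation and the constraint, and it remains to locate a genuine reduced 2D-eigentriplet within $O(\epsilon)$ of $(\mu_*,\lambda_*,\hat w)$ whose $\theta$-component is within $O(\epsilon^2)$ of $\lambda_*$.

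Since $V^HCV=\diag(c_1,c_2)$ with $c_1c_2=\det(V^HCV)\le-\gamma$, the numbers $|c_1|,|c_2|$ lie in $[\gamma/\|C\|,\|C\|]$; together with $\|z\|=1$, the constraint $z^H(V^HCV)z=0$ forces $|z_1|^2,|z_2|^2$ to prescribed values, each bounded away from $0$, leaving only the relative phase of $z$ free. Imposing the eigen-equation with $\theta\in\mathbb R$ forces that phase to equal $-\arg\big((V^HAV)_{12}\big)$ modulo $\pi$ — here $|(V^HAV)_{12}|\ge\gamma$ makes the determination well conditioned — and then pins $\nu$, hence $\theta$, through explicit rational formulas in the entries of $(V^HAV,V^HCV)$. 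Hence there are exactly two reduced 2D-eigentriplets $(\nu_\pm,\theta_\pm,z_\pm)$, with $z_+,z_-$ differing by a relative phase of $\pi$ (so $\|z_+-z_-\|$ is bounded below) and with $\nu_\pm,\theta_\pm,z_\pm$, as well as the sensitivities of these formulas, bounded in the data and $\gamma$. Feeding in the $O(\epsilon)$ residuals of $\hat w$: up to a global phase, $\hat w$ is $O(\epsilon)$-close to the constraint manifold, and the $O(\epsilon)$ eigen-residual places it in the $O(\epsilon)$-neighbourhood of exactly one of $z_+,z_-$, say $z_\sigma$, while simultaneously giving $|\mu_*-\nu_\sigma|=O(\epsilon)$. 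Taking $(\nu,\theta,Vz)=(\nu_\sigma,\theta_\sigma,Vz_\sigma)$ (the global phase absorbed into $z_\sigma$) gives the first bound, and $\|Vz_\sigma-x_*\|\le\|z_\sigma-\hat w\|+(1-\|w\|)+\|e\|=O(\epsilon)$ gives the third.

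For the quadratic bound we use a Rayleigh-quotient identity. Since $z_\sigma$ is a unit eigenvector of $(V^HAV)-\nu_\sigma(V^HCV)=V^H(A-\nu_\sigma C)V$ with eigenvalue $\theta_\sigma$, we have $\theta_\sigma=u^H(A-\nu_\sigma C)u$ with $u:=Vz_\sigma$. Write $u=\beta x_*+e'$ with $\beta=x_*^Hu$, $e'\perp x_*$; the third bound gives $|\beta-1|=O(\epsilon)$ and $\|e'\|=O(\epsilon)$, hence $1-|\beta|^2=\|e'\|^2=O(\epsilon^2)$. Now $x_*^H(A-\nu_\sigma C)x_*=x_*^H(A-\mu_*C)x_*-(\nu_\sigma-\mu_*)\,x_*^HCx_*=\lambda_*$ exactly, because $x_*^HCx_*=0$; and, using $(A-\mu_*C)x_*=\lambda_*x_*$ and $e'\perp x_*$, $x_*^H(A-\nu_\sigma C)e'=-(\nu_\sigma-\mu_*)\,x_*^HCe'=O(\epsilon^2)$; and $e'^H(A-\nu_\sigma C)e'=O(\epsilon^2)$. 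Therefore $\theta_\sigma=|\beta|^2\lambda_*+2\real\big(\bar\beta\,x_*^H(A-\nu_\sigma C)e'\big)+e'^H(A-\nu_\sigma C)e'=\lambda_*+O(\epsilon^2)$, which is the second bound; collecting the $O(\cdot)$ constants produces $\alpha_1,\alpha_2,\alpha_3$.

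The crux is that every estimate above must be uniform over $V\in\mathcal V_\gamma$, and this is exactly what the two hypotheses buy: $\det(V^HCV)\le-\gamma$ keeps $V^HCV$ uniformly indefinite, so the constraint manifold of~\eqref{eq:reduced2devp} stays non-degenerate and the forced magnitudes $|z_1|,|z_2|$ stay bounded away from $0$, while $|(V^HAV)_{12}|\ge\gamma$ keeps the phase determination well conditioned and $z_+,z_-$ a fixed distance apart; without them the $\alpha_i$ would blow up. The main technical burden is therefore the bookkeeping in the matching step — tracking how the $O(\epsilon)$ residuals of $\hat w$ propagate through the closed-form solution of the $2\times2$ reduced 2DEVP — and this is where the detailed argument in~\cite{2DEVPIII} does its work.
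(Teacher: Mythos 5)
The paper does not contain its own proof of Theorem~\ref{Thm2DRQI}; it explicitly defers to \cite{2DEVPIII}. So there is nothing in-paper to compare against, and I can only evaluate your proposal on its merits.

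Your sketch is sound and takes what is surely the intended route: project the defining relations of $(\mu_*,\lambda_*,x_*)$ onto $\Span\{V\}$ to obtain $O(\epsilon)$ residuals for the normalized coefficient vector $\hat w$, use the explicit two-solution structure of the $2\times2$ reduced 2DEVP (the hypotheses $\det(V^HCV)\le-\gamma$ and $|(V^HAV)_{12}|\ge\gamma$ make the magnitudes $|z_1|,|z_2|$ and the relative phase uniformly well-determined), match $\hat w$ to one of $z_\pm$, and finish with a Rayleigh-quotient identity for the quadratic rate in $\theta$, where the two cancellations $x_*^H(A-\nu_\sigma C)x_*=\lambda_*$ and $x_*^H(A-\mu_*C)e'=0$ (together with $|\nu_\sigma-\mu_*|=O(\epsilon)$, $\|e'\|=O(\epsilon)$) are exactly what is needed. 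Two points you should tighten when writing this out: (i)~the trivial-case cutoff should be an $\epsilon_0>0$ depending on the same data (small enough that the matching argument's $O(\epsilon)$ terms are meaningful), not $1/2$ a priori; (ii)~the matching step deserves to be spelled out — the constraint residual pins $(|\hat w_1|^2,|\hat w_2|^2)$ via inverting a $2\times2$ system whose determinant is $c_1-c_2\ge 2\gamma/\|C\|$, the imaginary part of the first row of the eigen-residual forces $\arg((V^HAV)_{12})+\arg(\hat w_2)\in\pi\mathbb Z+O(\epsilon)$, and then $|\nu_\sigma-\mu_*|=O(\epsilon)$ follows by contracting the residual $(\theta_\sigma-\lambda_*)z_\sigma+(\nu_\sigma-\mu_*)C_kz_\sigma=O(\epsilon)$ against $z_\sigma^HC_k$, using $z_\sigma^HC_kz_\sigma=0$ and $\|C_kz_\sigma\|^2=-c_1c_2\ge\gamma$. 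With those details added, the argument goes through.
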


Theorem~\ref{Thm2DRQI} indicates that to solve the 2DEVP, 
we should first search for a subspace $V$ where a good 
approximation of a 2D-eigentriplet lies in.
This is the essential idea guiding the derivation of 
a 2D Rayleigh quotient iteration in next section. 

%\begin{remark}{\rm 
%Here are remarks on the assumptions in Theorem~\ref{Thm2DRQI}:
%... \Red{are the items 1 and 3 related?, can we just say 
%$V^HCV$ is definite, diagonal, and nonsingular?}\Blue{(No. In fact, $\alpha_1,\alpha_2,\alpha_3$ depend on $\gamma$. When $\gamma$, or equivalently, $\min\{|\det(V^HCV)|,|(V^HAV)_{12}|\}$ tends to 0, $\alpha_i$s will tend to $\infty$. Thus if we only say $V^HCV$ is definite, diagonal and nonsingular, then no unified $\alpha_i$ in \eqref{eq:appealing} can be found.)}
%\begin{enumerate} 
%\item The assumption that 
%$V^HCV$ is indefinite is essential, otherwise 
%the projected 2DEVP \eqref{eq:reduced2devp}
%has no solution.  
%\sout{
%There is no loss of generality in assuming that 
%$V^HCV$ is diagonal since we can use $VS$ to replace $V$ where 
%$S^H(V^HCV)S$ is diagonal, where $S$ is a $2 \times 2$ unitary matrix. 
%} 
%
%\item The assumption $|(V^HAV)_{12}|\geq\gamma$ implies that 
%the 2D eigenvalues of the projected 2DEVP \eqref{eq:reduced2devp} 
%are still of multiplicity $1$.
%
%\end{enumerate} 
%}\end{remark}

%------------------------------------

%\newpage
\section{2D Rayleigh Quotient Iteration}\label{sec:rqi}
The Rayleigh quotient iteration (RQI) is an efficient single-vector
iterative method for solving the symmetric eigenvalue 
problem~\cite[Section~4.6]{Parlett1987Symmetric},\cite{Tapia2018}.
In this section, we derive a RQI-like method to solve 
the 2DEVP~\eqref{2deig}.

%\subsection{Derivation} \label{sec:subV} 
%\paragraph{Projection subspace $V_k$.} 
Theorem~\ref{Thm2DRQI} indicates that 
the gist of a RQI-like algorithm is how to use the 
$k$th approximation $(\mu_{k},\lambda_{k},x_{k})$ 
of a 2D-eigentriplet $(\mu_*,\lambda_*,x_*)$ 
to obtain a projection subspace $V_k$ closer to a 2D-eigenvector $x_*$ 
and then define the $k+1$-st approximation $(\mu_{k+1},\lambda_{k+1},x_{k+1})$
using a 2D Ritz triplet. 

	To that end, assume 
the Jacobian $J(\mu_k,\lambda_k,x_k)$ defined in \eqref{eq:jacobi} 
is nonsingular. Write 
$$
\mu_*=\mu_k+\Delta\mu_k, \quad 
\lambda_*=\lambda_k+\Delta\lambda_k,  \quad 
x_*=x_k+\Delta x_k,
$$ 
where $|\Delta\mu_k|\leq\epsilon$, $|\Delta\lambda_k|\leq\epsilon$ and 
$\|\Delta x_k\|\leq\epsilon$ for some small $\epsilon>0$. 
Then by \eqref{eq:1a}, we have
\begin{equation}\label{eq:Jkhat}
	\begin{bmatrix} A-\mu_kC-\lambda_kI & -Cx_k & -x_k 
	\end{bmatrix} 
	\begin{bmatrix} x_* \\ \Delta\mu_k\\ \Delta\lambda_k \end{bmatrix}
	\equiv 
	\widehat{J}_k 
	\begin{bmatrix} x_* \\ \Delta\mu_k\\ \Delta\lambda_k \end{bmatrix}
	= O(\epsilon^2),
\end{equation}
%where 
%	$\widehat{J}_k = \begin{bmatrix} A-\mu_kC-\lambda_kI & -Cx_k & -x_k 
	%	\end{bmatrix}$. 
This 
%Equation \eqref{eq:Jkhat} 
implies that up to the 
second-order approximation of $\epsilon$, the vector 
$\left[\begin{smallmatrix}x_*\\ 
	\Delta\mu_k\\ \Delta\lambda_k\end{smallmatrix}\right]$ 
lies in the null subspace of $\widehat{J}_k$. 
Since the Jacobian $J(\mu_k,\lambda_k,x_k)$ is assumed to be nonsingular, 
$\widehat{J}_k$ is of full rank and the dimension of the null subspace 
of $\widehat{J}_k$ is 2. 
Let $\left[\begin{smallmatrix}\widetilde{V}_k\\ R\end{smallmatrix}\right]$ 
be a basis matrix of the null subspace of $\widehat{J}_k$, 
where $\widetilde{V}_k\in\mathbb{C}^{n\times 2}$, 
$R\in\mathbb{C}^{2\times 2}$. Then by \eqref{eq:Jkhat}, 
up to the second-order approximation of $\epsilon$, $x_*$ lies approximately in 
$\Span\{\widetilde{V}_k\}$. Thus a natural idea is to use the 2D-Ritz triplet 
based on the Rayleigh quotient induced by 
$\Span\{\widetilde{V}_k\}$ to define the next iterate 
$(\mu_{k+1},\lambda_{k+1},x_{k+1})$.

%\paragraph{Computation of $\widetilde{V}_k$}
To compute $\widetilde{V}_k$, one can apply the traditional methods for computing the null space of $\widehat{J}_k$, such as the rank revealing QR decomposition~\cite[p.107]{demmel1997applied}. However, for exploiting the underlying structure and sparsity of $(A,C)$, we consider the following augmented linear equation of \eqref{eq:Jkhat}: 
\begin{equation}\label{2DRQIaugmen}
	J(\mu_k, \lambda_k, x_k) 
	\begin{bmatrix}
		X_a\\
		u\\
		v
	\end{bmatrix} = \begin{bmatrix}
		0&0\\
		1&0\\
		0&1\\
	\end{bmatrix}.
\end{equation}
By the first block row of \eqref{2DRQIaugmen}, 
$\Span\{X_a\} \subseteq \Span\{\widetilde{V}_k\}$.
Meanwhile, by the second and third block rows of \eqref{2DRQIaugmen}, 
$\dim(\Span\{X_a\})=2$. 
Since $\mbox{dim}(\Span\{\widetilde{V}_k\}) \leq 2$, 
we have 
\begin{equation} \label{eq:xeqv}
	\Span\{X_a\}=\Span\{\widetilde{V}_k\}.
\end{equation} 
Once $X_a$ is computed, an orthonormal basis of 
$\Span\{\widetilde{V}_k\}$ is given by 
\begin{equation}\label{eq:vkdef2}
	V_k = \mbox{orth}(X_a), 
\end{equation}
where $\mbox{orth}(X)$ denotes an orthonormal basis for the range 
of the matrix $X$.
We note that since $\widehat{J}_k$ is of full rank, 
$V_k$ is well-defined (up to an orthogonal transformation) 
even when $A-\mu_kC-\lambda_kI$ is singular.
The approach described here for 
computing a basis of a null space of a matrix via
an augmented system is 
inspired by \cite{Pan2010,Pan2009,Sifuentes2015Randomized} and
can be traced back to \cite{Peters1979}.

After obtaining the orthonormal basis matrix $V_k$ of the desired projection subspace,  we can define the 2DRQ:
\begin{equation} \label{eq:akck}
	(A_k, C_k) \equiv ({V}^H_k A {V}_k, {V}^H_k C {V}_k), 
\end{equation}
where for the sake of exposition, without loss of generality, 
we assume that $V_k$ is up to another orthogonal transformation such that
\begin{equation}\label{eq:determineVk}
	C_k = V_k^H C V_k = \twobytwo{c_{1,k}}{}{}{c_{2,k}}
	\quad \mbox{with} \quad
	c_{1,k} \geq c_{2,k}. 
\end{equation}

%\paragraph{Solving 2-by-2 2DEVP}
When $C_k$ is indefinite, we have the following $2\times 2$ 2DEVP of 
the 2DRQ \eqref{eq:akck}: 
\begin{subequations}\label{projectEVP}
	\begin{empheq}[left={}\empheqlbrace]{alignat=2}
		(A_k-\nu C_k-\theta I)z &= 0, \label{projectEVP1} \\
		z^HC_kz &= 0, \label{projectEVP2} \\
		z^Hz &= 1. \label{projectEVP3} 
	\end{empheq}
\end{subequations}
By Section~3 of Part I \cite{2DEVPI}, we know that for the $2\times 2$ 
2DEVP \eqref{projectEVP}, if $a_{12,k}\neq 0$, 
where $a_{ij,k}$ is the $(i,j)$ element of $A_k$,  
then there are two distinct 2D-eigentriplets of \eqref{projectEVP}
\begin{equation}\label{def:2deigsimple}
	(\nu(\alpha_{k,i}),\theta(\alpha_{k,i}),z(\alpha_{k,i})) 
	\quad \mbox{for} \quad i = 1,2,  
\end{equation} 
where $\alpha_{k,i} = \pm {|a_{12,k}|}/{a_{12,k}}$, and   
\begin{equation*}\label{def:zalpha}
	\nu(\alpha) = \frac{z(\alpha)^HC_kA_kz(\alpha)}{\|C_kz(\alpha)\|^2}, \quad
	\theta(\alpha) = z(\alpha)^HA_kz(\alpha),\quad
	z(\alpha) = \begin{bmatrix}
		\sqrt{\frac{-c_{2,k}}{c_{1,k}-c_{2,k}}}\\
		\alpha\sqrt{\frac{c_{1,k}}{c_{1,k}-c_{2,k}}}
	\end{bmatrix},
\end{equation*}
Otherwise, if $a_{12,k} = 0$, the 2D-eigentriplets of \eqref{projectEVP} 
are given by
\begin{equation} \label{eq:2deigsmultiple} 
	(\nu_1,\theta_1,z(\alpha))\equiv 
	\left(\frac{a_{11,k}-a_{22,k}}{c_{1,k}-c_{2,k}}, 
	\frac{a_{22,k}c_{1,k}-a_{11,k}c_{2,k}}{c_{1,k}-c_{2,k}}, z(\alpha)\right),
\end{equation} 
where $\alpha \in \mathbb{C}$ and $|\alpha|=1$.
From the 2D-eigentriplets \eqref{def:2deigsimple} or \eqref{eq:2deigsmultiple} 
of $(A_k, C_k)$, we can use the following 2D Ritz triplets to define 
the $k+1$st iterate $(\mu_{k+1}, \lambda_{k+1}, x_{k+1})$: 
\begin{equation}   \label{eq:update1a}
	\mu_{k+1}=\nu(\alpha_{k,j}), \quad
	\lambda_{k+1}=\theta(\alpha_{k,j}) \quad \mbox{and} \quad
	x_{k+1} = V_kz(\alpha_{k,j}),
\end{equation}
when $a_{12,k}\neq 0$, where $j$ is the index such that 
$|\mu_k-\nu(\alpha_{k,j})|+|\lambda_k-\theta(\alpha_{k,j})|$ is smaller
one for $j = 1, 2$. Otherwise, when $a_{12,k} = 0$,  
the $k+1$st iterate $(\mu_{k+1}, \lambda_{k+1}, x_{k+1})$ is given by
\begin{equation}   \label{eq:update1b}
	\mu_{k+1}= \nu_1, \quad
	\lambda_{k+1}= \theta_1 
	\quad \mbox{and} \quad
	x_{k+1} = V_kz(1), 
\end{equation}
where for the sake of convenience, we choose $\alpha=1$.

%\paragraph{Treatment when $C_k$ is not indefinite.} 
When $C_k$ is not indefinite, as we may encounter at 
early stages of iterations, we propose the following strategy for determining the $k+1$st iterate $(\mu_{k+1}, \lambda_{k+1}, x_{k+1})$. First, since the exact 2D-eigenvector $x_*$ satisfies $x_*^HCx_*=0$, 
we choose a unit vector $x_{k+1}$ to 
minimize $|x^HCx|$ for $x \in \Span\{V_k\}$.
Specifically, when $c_{1,k}\neq c_{2,k}$,  up to a scaling, 
$x_{k+1}$ is uniquely determined by
\begin{equation}\label{eq:updatedefinite1}
	x_{k+1} = \left\{\begin{aligned}
		V_ke_1, & \quad |c_{1,k}|< |c_{2,k}|,\\
		V_ke_2, & \quad |c_{1,k}|> |c_{2,k}|. \\	
	\end{aligned}
	\right.
\end{equation}  
When $c_{1,k} = c_{2,k}$, we use 
\begin{equation}\label{eq:updatedefinite2}
	x_{k+1}=V_kw/\|V_kw\|,
\end{equation}
where $w$ is a uniformly distributed random vector on $[-1,1]$. 
Once $x_{k+1}$ is determined by \eqref{eq:updatedefinite1}
or \eqref{eq:updatedefinite2}, 
$(\mu_{k+1},\lambda_{k+1})$ is obtained
by solving the following least squares problem:
\begin{equation}  \label{eq:leastsquare}
	(\mu_{k+1},\lambda_{k+1}) = 
	\arg\min\limits_{\nu,\theta\in\mathbb{R}} 
	\left\|Ax_{k+1}-\nu Cx_{k+1}-\theta x_{k+1}\right\|.
	%|\nu-\mu_k|^2+|\theta -\lambda_k|^2.
\end{equation} 

\subsection{Algorithm outline}\label{subsec:outline}
Algorithm~\ref{alg:2dRQIps} summarizes the derivation in
the previous section for an algorithm to compute a 2D-eigentriplet. 
It is called {\em 2DRQI} since the algorithm is an extension of the 
RQI for a Hermitian matrix $A$. 
By~\eqref{2DRQIaugmen} and \eqref{eq:vkdef2}, we see that  
when $A-\mu_{k}C-\lambda_kI$ is nonsingular, 
\[ 
\Span \{V_{k}\} = 
\Span\{(A-\mu_{k}C-\lambda_kI)^{-1}x_k,(A-\mu_{k}C-\lambda_kI)^{-1}Cx_k\}. 
\] 
If $C = 0$ and $\lambda_k$ is the Rayleigh quotient of $A$ and $x_k$, 
then $\Span\{V_{k}\} = \Span\{(A-\lambda_kI)^{-1}x_k\}$ is
the one used 
in the classical RQI, see e.g.~\cite[Section~4.6]{Parlett1987Symmetric}. 
A few remarks of Algorithm~\ref{alg:2dRQIps} are in order. 

\begin{algorithm}[htbp] %[H]
	\caption{2DRQI} \label{alg:2dRQIps}
	\begin{algorithmic}[1]
		\REQUIRE{$n\times n$ Hermitian matrices $A$ and $C$, and $C$ is
			indefinite; initial $(\mu_0,\lambda_0,x_0)$, 
			{\tt tol}, {\tt maxit}.
		}
		
		\ENSURE{An
			approximate 2D-eigentriplet 
			$(\widehat{\mu}, \widehat{\lambda}, \widehat{x})$ 
			and an estimated backward error $\eta_1$.
		}
		
		\FOR{$k=0,1,2,\ldots, \mbox{\tt maxit}$}
		
		\STATE solve the linear system \eqref{2DRQIaugmen} for 
		the $n\times 2$ matrix $X_a$;
		
		\STATE\label{step:Vk} 
		set $V_k = \mbox{orth}(X_a)$ and 
		update $V_k$ to satisfy \eqref{eq:determineVk};
		
		\STATE 
		solve the $2\times2$ 2DEVP~\eqref{projectEVP} of
		$(A_k, C_k) = (V_k^HAV_k, \mbox{Diag}(c_{1,k},c_{2,k}))$;  
		
		\IF{${C}_k$ is indefinite} 
		
		\STATE determine $(\mu_{k+1},\lambda_{k+1},x_{k+1})$
		by~\eqref{eq:update1a} or~\eqref{eq:update1b}; 
		
		%\ELSE{(${C}_k$ is not indefinite)} 
		\ELSE{}
		\IF{$|c_{1,k}|\neq|c_{2,k}|$}   
		\STATE determine $x_{k+1}$ by~\eqref{eq:updatedefinite1};   
		\ELSE
		\STATE determine $x_{k+1}$ by~\eqref{eq:updatedefinite2}; 
		\ENDIF
		\STATE determine $(\mu_{k+1},\lambda_{k+1})$ 
		by solving \eqref{eq:leastsquare};
		\ENDIF  \label{step:normalization}
		
		\STATE  \label{step:stop} 
		exit for-loop if 
		{$\eta_1(\mu_{k+1},\lambda_{k+1},x_{k+1})\leq {\tt tol}$}
		
		\ENDFOR
		\RETURN 
		$(\widehat{\mu}, \widehat{\lambda}, \widehat{x}) = 
		(\mu_{k+1},\lambda_{k+1},x_{k+1})$ and 
		$\eta_1(\widehat{\mu}, \widehat{\lambda}, \widehat{x})$.
	\end{algorithmic}
\end{algorithm}

\begin{enumerate}
	\item A proper initial $(\mu_0, \lambda_0, x_0)$ 
	is critical for the rapid convergence of the algorithm.
	The initial pair $(\mu_0, \lambda_0)$ should be close 
	to a 2D-eigenvalue of interest. 
	% which is similar to the shift selection 
	%for solving an eigenvalue problem
	%with the shift-and-invert spectral transformation. 
	For the initial vector $x_0$,
	we first compute a 2D-Ritz triplet $(\nu, \theta, z)$
	of 2DRQ $(X^H A X, X^H C X)$, 
	where $X$ consists of the two orthonormal eigenvectors 
	corresponding to two eigenvalues of $A-\mu_0C$ closest to $\lambda_0$,
	and then set $x_0$ to be the 2D-Ritz vector $X z$ associated with 
	the 2D-Ritz value $(\nu, \theta)$ closest to $(\mu_0,\lambda_0)$.
	\item To solve the linear system \eqref{2DRQIaugmen}, 
	we should exploit the structure and sparsity of matrices $A$ and $C$.
	See numerical examples in Section~\ref{sec:Experiments}.  
	\item We use an estimate $\eta_1$ of the backward error of approximate 2D-eigentriplet	$(\mu_{k},\lambda_{k},x_{k})$ as the stopping criterion, see Theorem~\ref{Thm:back} in Section~\ref{sec:backerr}. 
	
\end{enumerate}

In Section~\ref{sec:Experiments}, we will provide 
examples to demonstrate that the 2DRQI is locally quadratically convergent. A formal convergence analysis of the 2DRQI will be presented in Part III of this paper \cite{2DEVPIII}. 

%----------------------------

\section{Backward error analysis of 2DEVP} \label{sec:backerr}

It is well-known that the backward error of an approximate solution is 
a reliable and effective stopping criterion in an iterative algorithm. 
In this section, we provide a backward error analysis for
an approximate 2D-eigentriplet of the 2DEVP \eqref{2deig}. 
The resulting backward error estimate can be
used as the stopping criterion of the 2DRQI (Algorithm~\ref{alg:2dRQIps}). 
In Section~\ref{sec:dti}, the notion of the backward error analysis 
of the 2DEVP will be extended to applications for
the computation of the distance to instability 
in Section~\ref{sec:dti}. 
%\paragraph{Definition of backward error of an approximate 2D-eigentriplet.} 
We start with the following theorem. 

\begin{theorem} \label{thm:pert2devp}  
	Let $(\widehat{\mu}, \widehat{\lambda}, \widehat{x})$ be an 
	approximate 2D-eigentriplet of $(A,C)$ with 
	$\widehat{\mu}, \widehat{\lambda}\in\mathbb{R}$ and $\|\widehat{x}\|=1$. 
	Then there exist Hermitian matrices $\delta A$ and $\delta C$ such that 
\begin{itemize} 
\item[(i)] $C+\delta C$ is indefinite, and 
\item[(ii)] $(\widehat{\mu},\widehat{\lambda},\widehat{x})$
is an exact 2D-eigentriplet of 
the perturbed matrix pair $(A+\delta A,C+\delta C)$:
\begin{subnumcases}{\label{2deig-perturb}}
\left( A + \delta A - \widehat{\mu} (C + \delta C) \right)\widehat{x} = \widehat{\lambda} \widehat{x},\label{eq:1ap}\\
\hspace{1.64cm}\widehat{x}^H (C+\delta C) \widehat{x}  = 0,\label{eq:1bp}\\
\hspace{3.08cm}\widehat{x}^H\widehat{x}  = 1. \label{eq:1cp}
\end{subnumcases}
\end{itemize} 
\end{theorem}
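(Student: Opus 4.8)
The plan is to construct $\delta A$ and $\delta C$ explicitly from the residuals of the approximate 2D-eigentriplet, in the spirit of classical backward error analysis for eigenvalue problems. First I would introduce the three residuals
\[
r = A\widehat{x} - \widehat{\mu} C\widehat{x} - \widehat{\lambda}\widehat{x}, \qquad
\sigma = \widehat{x}^H C \widehat{x}, \qquad
\text{(and note } \widehat{x}^H\widehat{x} = 1 \text{ is exact by assumption).}
\]
Equation~\eqref{eq:1cp} then holds automatically, so the work is to force \eqref{eq:1ap} and \eqref{eq:1bp} while keeping $\delta A$, $\delta C$ Hermitian and $C + \delta C$ indefinite. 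The natural idea is to absorb $\sigma$ into $\delta C$ and then absorb the remaining part of $r$ into $\delta A$: choose $\delta C$ so that $\widehat{x}^H(C+\delta C)\widehat{x} = 0$, i.e. a Hermitian rank-one (or rank-two) correction such as $\delta C = -\sigma\, \widehat{x}\widehat{x}^H$, which gives (ii)(i.e. \eqref{eq:1bp}) since $\|\widehat{x}\| = 1$. With $\delta C$ fixed, define the updated residual $\widehat{r} = A\widehat{x} - \widehat{\mu}(C+\delta C)\widehat{x} - \widehat{\lambda}\widehat{x} = r + \widehat{\mu}\sigma\,\widehat{x}$, and then set $\delta A$ to a Hermitian matrix that maps $\widehat{x}$ to $-\widehat{r}$; the standard choice is $\delta A = -\widehat{r}\,\widehat{x}^H - \widehat{x}\,\widehat{r}^H + (\widehat{x}^H\widehat{r})\,\widehat{x}\widehat{x}^H$, which is Hermitian and satisfies $\delta A\,\widehat{x} = -\widehat{r}$, yielding \eqref{eq:1ap}.

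The remaining point is (i), that $C + \delta C$ is indefinite. Here I would use that $C$ itself is indefinite by hypothesis: $C$ has at least one positive and one negative eigenvalue, hence a positive-definite and a negative-definite direction. A rank-one Hermitian perturbation $-\sigma\widehat{x}\widehat{x}^H$ changes the inertia by at most one in each direction (by Weyl's inequalities or the interlacing theorem for rank-one updates), so if $C$ has at least two positive and two negative eigenvalues then $C + \delta C$ is automatically indefinite. The genuinely delicate case is when $C$ has exactly one eigenvalue of one sign; then a rank-one correction could in principle tip the inertia. I expect this to be the main obstacle, and the fix is to choose the correction more carefully — e.g. take $\delta C$ supported on $\Span\{\widehat{x}\}$ but split as $\delta C = \beta_+ p_+p_+^H + \beta_- p_-p_-^H$ using a two-dimensional piece, or simply observe that one can always pick among a one-parameter family of valid $\delta C$ (any Hermitian $\delta C$ with $\widehat{x}^H\delta C\,\widehat{x} = -\sigma$) one whose norm is small enough that $C+\delta C$ retains an eigenvalue of each sign, since $C$ being indefinite means it has a positive and a negative eigenvalue bounded away from $0$ only if it is nonsingular — in the singular or near-singular case one perturbs along the appropriate invariant subspace to land strictly on the indefinite side.

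Finally I would remark that the construction gives quantitative bounds $\|\delta A\| \le 2\|\widehat{r}\|$ and $\|\delta C\| = |\sigma|$ (up to the indefiniteness adjustment), which is exactly what is needed for the stopping-criterion discussion and for defining $\eta_1$ in Algorithm~\ref{alg:2dRQIps}; the explicit norm-optimal version of $\delta A$ and $\delta C$ is presumably the content of the next theorem (Theorem~\ref{Thm:back}), so for the present existence statement the above explicit formulas suffice and only the inertia bookkeeping in (i) requires care.
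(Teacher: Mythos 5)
Your overall strategy is the right one and matches the paper's: first build a Hermitian $\delta C$ to absorb the constraint residual $\sigma = \widehat{x}^HC\widehat{x}$, then build a Hermitian $\delta A$ to absorb the remaining eigen-residual. Your explicit formulas for part (ii) are valid: $\widehat{x}^H\widehat{r}$ is real (it equals $\widehat{x}^HA\widehat{x}-\widehat{\lambda}$ since $\widehat{x}^H(C+\delta C)\widehat{x}=0$), so $\delta A = -\widehat{r}\widehat{x}^H - \widehat{x}\widehat{r}^H + (\widehat{x}^H\widehat{r})\widehat{x}\widehat{x}^H$ is Hermitian and maps $\widehat{x}\mapsto -\widehat{r}$. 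The paper instead takes $\delta\widehat{C}=-\sigma I$ and a scaled Householder for $\delta A$, but these are cosmetic differences.

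The genuine gap is in part (i). You correctly flag that the rank-one choice $\delta C=-\sigma\widehat{x}\widehat{x}^H$ can leave $C+\delta C$ semi-definite (your inertia count is right, and the delicate case is exactly when $C$ has only one eigenvalue of some sign). But neither of your proposed fixes actually closes the gap. The ``pick a valid $\delta C$ of smaller norm'' idea cannot work: the constraint $\widehat{x}^H\delta C\,\widehat{x}=-\sigma$ already forces $\|\delta C\|\ge|\sigma|$, and the unique norm-minimizer is precisely the problematic $-\sigma\widehat{x}\widehat{x}^H$ itself; every other admissible $\delta C$ has \emph{larger} norm. And ``$\delta C$ supported on $\Span\{\widehat{x}\}$'' is still rank one and runs into the same obstruction. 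The missing observation --- which is the crux of the paper's proof --- is this: if $C+\delta C$ is \emph{not} indefinite yet satisfies $\widehat{x}^H(C+\delta C)\widehat{x}=0$, then $C+\delta C$ is semi-definite and so $(C+\delta C)\widehat{x}=0$ (a semi-definite Hermitian $M$ with $\widehat{x}^HM\widehat{x}=0$ must annihilate $\widehat{x}$). Once you know $\widehat{x}\in\ker(C+\delta C)$, you add a tiny Hermitian perturbation of the form $\Delta(\widehat{x}y^H+y\widehat{x}^H)$ with $y\perp\widehat{x}$ and $\Delta\neq 0$: this leaves $\widehat{x}^H(C+\delta C)\widehat{x}=0$ unchanged, while in the orthonormal basis $\{\widehat{x},y,\dots\}$ the updated matrix has a leading $2\times 2$ principal block $\left[\begin{smallmatrix}0&\Delta\\ \Delta&*\end{smallmatrix}\right]$ with determinant $-\Delta^2<0$, which forces indefiniteness. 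This is exactly the $Q$-conjugation argument in the paper's proof, and it is the piece your write-up is missing.
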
 
\begin{proof}  We prove by construction. 
	We first find the desired perturbation
	matrix $\delta C$ to satisfy \eqref{eq:1bp}. 
	Define $\delta\widehat{C}=-(\widehat{x}^HC\widehat{x})I$. 
	Then 
	\begin{equation}  \label{eq:cdeltac} 
		\widehat{x}^H(C+\delta\widehat{C})\widehat{x}=0. 
	\end{equation}
	If $C+\delta\widehat{C}$ is indefinite, then 
	\eqref{eq:1bp} holds by taking $\delta C = \delta\widehat{C}$. 
	If $C+\delta\widehat{C}$ is not indefinite, 
	then $C+\delta\widehat{C}$ is positive or negative semi-definite. 
	Equation~\eqref{eq:cdeltac} implies $(C+\delta\widehat{C})\widehat{x}=0$. 
	Let $Q$ be an orthogonal matrix with $Qe_1 = \widehat{x}$. 
	Then we have $Q^H(C+\delta\widehat{C})Qe_1=0$ and 
	\[
	Q^H(C+\delta\widehat{C})Q
	= \begin{bmatrix} 
		0 & 0 \\ 
		0 & \widehat{C}_1 
	\end{bmatrix},
	\] 
	%
	%                 =\bbordermatrix{
		%                        & 1 & n-1\cr 
		%		  1	& 0 & 0 \cr
		%	          n-1   & 0 & \widehat{C}_1\cr},
	%
	where $\widehat{C}_1$ is an $(n-1)$-by-$(n-1)$ matrix. 
	Define 
	\begin{equation}
		\delta C =\delta\widehat{C}+Q\bbordermatrix{
			&1 & 1 & n-2 \cr
			1 &0 & \Delta & 0 \cr
			1 &\Delta & 0 & 0 \cr
			n-2 &0 & 0 & 0}Q^H
	\end{equation}
	with a nonzero scalar $\Delta$. 
	Then it can be verified that 
	$Q^H(C+\delta C)Q$ (and thus $C+\delta C$) is indefinite 
	and \eqref{eq:1bp} holds. 
	
	For finding the desired perturbation matrix $\delta A$ to 
	satisfy \eqref{eq:1ap}, 
	let $\delta\widehat{A}$ be a Hermitian matrix 
	such that $\delta\widehat{A}\, \widehat{x}={h}/\|{h}\|$, 
	${h}=-(A-\widehat{\lambda}I)\widehat{x}+\widehat{\mu}(C+\delta C)\widehat{x}$.
	For example, $\delta\widehat{A}$ can be a 
	Householder matrix \cite[Theorem~2.1.13]{horn2012}. 
	Then it is straightforward to verify that \eqref{eq:1ap} holds with 
	$\delta A=\|h\|\delta\widehat{A}$. This completes the proof. 
\end{proof}

%-------------------------------

%\paragraph{Backward error definition.} 
By Theorem~\ref{thm:pert2devp},   
the backward error $\eta$ of an approximate 2D-eigentriplet 
$(\widehat{\mu}, \widehat{\lambda}, \widehat{x})$ of $(A, C)$
is defined as the infimum of normwise relative perturbation 
of $A$ and $C$ such that $(\widehat{\mu},\widehat{\lambda},\widehat{x})$ 
is an exact 2D-eigentriplet 
of the perturbed 2DEVP~\eqref{2deig-perturb}:
\begin{equation}\label{def:backerr}
	\eta \equiv \inf \left\{\epsilon  \ \Big| \
	\exists\, \delta A, \delta C 
	\,\mbox{s.t.} \,
	\|\delta A\|\leq \epsilon\|A\|, \|\delta C\|\leq \epsilon\|C\|, \, 
	\mbox{and \eqref{2deig-perturb} holds}   
	\right\}. 
\end{equation}
%We are excluded from an easy-to-compute expression of $\eta$. 
%However, we have 
The following theorem provides a tight computable estimate of $\eta$.

\begin{theorem}\label{Thm:back}
	Let $(\widehat{\mu}, \widehat{\lambda}, \widehat{x})$ be 
	an approximate 2D-eigentriplet of $(A,C)$ with $\widehat{\mu}, \widehat{\lambda}\in\mathbb{R}$ and $\|\widehat{x}\|=1$, and 
	\begin{equation} \label{eq:backerr1}
		\eta_1 = \max\left\{
		\frac{|\gamma_A|}{\|A\|},\,
		\frac{|\gamma_C|}{\|C\|},\,
		\frac{\|r\|}{\|A\|+|\widehat{\mu}|\|C\|}
		\right\}, 
	\end{equation}
	where 
	$\gamma_A=\widehat{x}^HA\widehat{x}-\widehat{\lambda}$,
	$\gamma_C=\widehat{x}^HC\widehat{x}$ 
	and $r= (A-\widehat{\mu} C-\widehat{\lambda} I)\widehat{x}$. 
	Then the backward error $\eta$ of 
	$(\widehat{\mu}, \widehat{\lambda}, \widehat{x})$ 
	defined in \eqref{def:backerr} satisfies
	\begin{equation} \label{eq:backerrbd}
		\eta_1\leq\eta\leq\sqrt{2}\,\eta_1.
	\end{equation}
\end{theorem}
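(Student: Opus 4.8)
The plan is to establish the two inequalities in \eqref{eq:backerrbd} separately, with the lower bound $\eta_1 \le \eta$ coming from a direct perturbation argument and the upper bound $\eta \le \sqrt{2}\,\eta_1$ coming from an explicit construction of $\delta A$ and $\delta C$.

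For the lower bound, I would start from an arbitrary admissible perturbation pair $(\delta A, \delta C)$ with $\|\delta A\| \le \epsilon\|A\|$, $\|\delta C\| \le \epsilon\|C\|$ for which \eqref{2deig-perturb} holds, and extract three scalar/vector identities by testing the perturbed equations against $\widehat{x}$. From \eqref{eq:1bp} we get $\widehat{x}^H(C+\delta C)\widehat{x} = 0$, hence $|\gamma_C| = |\widehat{x}^H\delta C\,\widehat{x}| \le \|\delta C\| \le \epsilon\|C\|$. From \eqref{eq:1ap}, premultiplying by $\widehat{x}^H$ and using $\|\widehat{x}\| = 1$ together with $\widehat{x}^H(C+\delta C)\widehat{x}=0$, we obtain $\widehat{x}^H(A+\delta A)\widehat{x} = \widehat{\lambda}$, so $|\gamma_A| = |\widehat{x}^H\delta A\,\widehat{x}| \le \|\delta A\| \le \epsilon\|A\|$. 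Finally, rearranging \eqref{eq:1ap} gives $r = -\delta A\,\widehat{x} + \widehat{\mu}\,\delta C\,\widehat{x}$, whence $\|r\| \le \|\delta A\| + |\widehat{\mu}|\|\delta C\| \le \epsilon(\|A\| + |\widehat{\mu}|\|C\|)$. Dividing through and taking the max shows $\eta_1 \le \epsilon$ for every admissible $\epsilon$, hence $\eta_1 \le \eta$.

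For the upper bound, I would construct perturbations achieving $\epsilon = \sqrt{2}\,\eta_1$. Take $\delta C$ of the form used in the proof of Theorem~\ref{thm:pert2devp}: essentially $\delta C = -\gamma_C(I - \widehat{x}\widehat{x}^H)$ so that $\widehat{x}^H(C+\delta C)\widehat{x} = 0$ and $(C+\delta C)\widehat{x} = C\widehat{x} - \gamma_C\widehat{x}$, noting $\|\delta C\| \le |\gamma_C| \le \eta_1\|C\|$ (and absorbing any rank-2 correction needed to force indefiniteness of $C+\delta C$ into an arbitrarily small term, so it does not affect the bound). Next choose $\delta A$ as a Hermitian rank-one-type perturbation with $\delta A\,\widehat{x} = \widehat{\lambda}\widehat{x} - (A - \widehat{\mu}(C+\delta C))\widehat{x} = -r + \widehat{\mu}(C\widehat{x} - (C+\delta C)\widehat{x}) = -r - \widehat{\mu}\delta C\,\widehat{x}$ — wait, more carefully, $\delta A\,\widehat{x}$ should equal $\widehat{\lambda}\widehat{x} - A\widehat{x} + \widehat{\mu}(C+\delta C)\widehat{x} = -r' $ where $r'$ is the residual with respect to the already-perturbed $C$; since $\delta C\,\widehat{x} = -\gamma_C\widehat{x}$, we get $r' = r - \widehat{\mu}\gamma_C\widehat{x}$, and by the Householder-type construction one can take $\|\delta A\| = \|r'\|$. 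The key estimate is then to bound $\|r'\|^2$ in terms of $\eta_1$. I would use orthogonality: decompose $r = \gamma_A\widehat{x} + r_\perp$ where $r_\perp \perp \widehat{x}$ (this holds because $\widehat{x}^Hr = \gamma_A$), so $\|r\|^2 = |\gamma_A|^2 + \|r_\perp\|^2$; the term $-\widehat{\mu}\gamma_C\widehat{x}$ adjusts only the $\widehat{x}$-component, and one checks that the adjusted coefficient has modulus $\le \eta_1(\|A\| + |\widehat{\mu}|\|C\|)$ because the original $r$ already has $\|r\| \le \eta_1(\|A\|+|\widehat{\mu}|\|C\|)$ and the definiteness-driven cancellation works in our favor — this is exactly where the factor $\sqrt{2}$ enters, from combining $|\gamma_A| \le \eta_1\|A\|$ with $\|r\| \le \eta_1(\|A\|+|\widehat{\mu}|\|C\|)$ in an $\ell^2$ fashion rather than an $\ell^1$ fashion. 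Then $\|\delta A\| = \|r'\| \le \sqrt{2}\,\eta_1(\|A\| + |\widehat{\mu}|\|C\|)$, i.e. $\|\delta A\| \le \sqrt{2}\,\eta_1\|A\|$ is too strong — rather the bound is relative to $\|A\|+|\widehat\mu|\|C\|$, which forces us to also rescale: split the residual correction between $\delta A$ and $\delta C$ optimally so that both $\|\delta A\| \le \sqrt{2}\,\eta_1\|A\|$ and $\|\delta C\| \le \sqrt{2}\,\eta_1\|C\|$ hold simultaneously.

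The main obstacle I anticipate is precisely this last balancing step: getting the constant exactly $\sqrt{2}$ rather than $2$. The naive construction (put all of the residual into $\delta A$) only gives $\|\delta A\| \le \|r\| + |\gamma_A|$-type bounds leading to a factor $2$; to sharpen it to $\sqrt{2}$ one must split the residual $r'$ into its component along $\widehat{x}$ (absorbed jointly, or left as $\gamma_A$) and its orthogonal component, and distribute the correction between $\delta A$ and $\delta C$ so that the worst-case ratio $\max\{\|\delta A\|/\|A\|, \|\delta C\|/\|C\|\}$ over all decompositions is minimized; the extremal case where $\|r_\perp\|$ and $|\gamma_A|$ are comparable is what produces $\sqrt{2}$. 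I would carry out this optimization explicitly as the technical heart of the proof, then verify that the resulting $C + \delta C$ can still be made indefinite by an infinitesimal further perturbation (invoking Theorem~\ref{thm:pert2devp}'s construction), so that the infimum in \eqref{def:backerr} is attained up to the stated factor.
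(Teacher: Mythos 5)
Your proposal takes essentially the same route as the paper's proof. The lower bound argument is identical (testing the perturbed equations against $\widehat{x}$ and splitting off the residual). For the upper bound you correctly identify the two key ideas — decompose the residual into its component along $\widehat{x}$ and its orthogonal complement, and apportion the orthogonal part between $\delta A$ and $\delta C$ so that both normwise bounds hold simultaneously — and you correctly diagnose that putting all of the residual into $\delta A$ only gives a worse constant. Two small slips to fix: first, $\delta C = -\gamma_C(I-\widehat{x}\widehat{x}^H)$ annihilates $\widehat{x}$ and so does not cancel $\gamma_C$ (you want $\delta C\widehat{x} = -\gamma_C\widehat{x}$, not $\delta C\widehat{x}=0$); second, $\widehat{x}^H r = \gamma_A - \widehat\mu\gamma_C$, not $\gamma_A$ — it is the adjusted residual $r' = r + \widehat\mu\gamma_C\widehat{x}$ that has $\widehat{x}^H r' = \gamma_A$, as you in effect note a few lines later. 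The balancing step you postpone is exactly what the paper carries out explicitly: set
$\widetilde a = -\tfrac{\|A\|}{\|A\|+|\widehat\mu|\|C\|}(I-\widehat{x}\widehat{x}^H)r$ and
$\widetilde c = \tfrac{\sign(\widehat\mu)\|C\|}{\|A\|+|\widehat\mu|\|C\|}(I-\widehat{x}\widehat{x}^H)r$,
so that $\widetilde a - \widehat\mu\,\widetilde c = -(I-\widehat{x}\widehat{x}^H)r$ with both vectors orthogonal to $\widehat{x}$; then take
$a = -\gamma_A\widehat{x}+\widetilde a$ and $c=-\gamma_C\widehat{x}+\widetilde c$, realize these as actions of Householder-type Hermitian $\delta A,\delta C$ with $\|\delta A\|=\|a\|$, $\|\delta C\|=\|c\|$, and use the Pythagorean identity $\|a\|^2 = \gamma_A^2 + \|\widetilde a\|^2$ together with $\|\widetilde a\|/\|A\| \le \|r\|/(\|A\|+|\widehat\mu|\|C\|) \le \eta_1$ and $|\gamma_A|/\|A\|\le\eta_1$ to get $\|\delta A\|/\|A\| \le \sqrt{2}\,\eta_1$, and similarly $\|\delta C\|/\|C\|\le\sqrt{2}\,\eta_1$. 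Your plan to restore indefiniteness of $C+\delta C$ by an infinitesimal further perturbation, invoking the construction in Theorem~\ref{thm:pert2devp} and the fact that the backward error is an infimum, is exactly the paper's argument as well.
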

\begin{proof} We first prove the lower bound $\eta\geq\eta_1$. 
	For any $\delta A$ and $\delta C$ satisfying the perturbed 
	2DEVP~\eqref{2deig-perturb}, by \eqref{eq:1ap} and \eqref{eq:1bp}, 
	we have $\widehat{x}^H(A+\delta A)\widehat{x}= \widehat\lambda$. 
	Hence by the definition~\eqref{def:backerr} of $\eta$, we have
	\begin{equation} \label{eq:gammaC} 
		\eta \ge \frac{\|\delta C\|}{\|C\|} 
		\ge \frac{|\widehat{x}^H\delta C\widehat{x}|}{\|C\|} 
		= \frac{|\widehat{x}^HC\widehat{x}|}{\|C\|} 
		= \frac{|\gamma_C|}{\|C\|}, 
	\end{equation} 
	and 
	\begin{equation} \label{eq:gammaA} 
		\eta \ge \frac{\|\delta A\|}{\|A\|} \ge
		\dfrac{|\widehat{x}^H\delta A \widehat{x}|}{\|A\|} 
		= \dfrac{|\widehat{x}^HA\widehat{x}-\widehat\lambda |}{\|A\|}
		= \dfrac{|\gamma_A|}{\|A\|}.
	\end{equation} 
	Now by \eqref{eq:1ap},  for the norm of the residual vector $r$: 
	\begin{align*} 
		\|r\| 
		= \| (A-\widehat\mu C -\widehat\lambda I) \widehat{x} \|  
		= \| (\delta A -\widehat\mu \delta C) \widehat{x} \|  
		\le \|\delta A\|+|\widehat{\mu}| \|\delta C\| 
		\le (\|A\|+|\widehat{\mu}| \|C\|) \epsilon.
	\end{align*} 
	Therefore, by the definition~\eqref{def:backerr} of $\eta$, we have
	\begin{equation} \label{eq:rsbd} 
		\eta \ge \frac{\|r\|}{\|A\|+|\widehat{\mu}|\|C\|}.
	\end{equation} 
	Combining \eqref{eq:gammaC}, \eqref{eq:gammaA} and \eqref{eq:rsbd}, 
	we have $\eta \ge \eta_1$.
	
	The gist of finding the upper bound of $\eta$, 
	namely $\eta \leq \sqrt{2}\eta_1$, 
	is to find two particular perturbation matrices $\delta{A}$ and $\delta{C}$ 
	such that 
	\begin{subnumcases}{\label{eq:constructDelta}}
		\delta{A}\widehat{x}-\widehat{\mu}\,\delta{C}\widehat{x} =-r, \label{eq:constructDelta1} \\
		\hspace{2.5em}\widehat{x}^H\delta{C}\widehat{x} =- \gamma_C, \label{eq:constructDelta2}	
	\end{subnumcases}
	%	\begin{subequations}\label{eq:constructDelta}
		%		\begin{empheq}[left={}\empheqlbrace]{alignat=2}
			%			\delta{A}\widehat{x}-\widehat{\mu}\,\delta{C}\widehat{x} 
			%			&=-r, \label{eq:constructDelta1} \\
			%			\widehat{x}^H\delta{C}\widehat{x} &=- \gamma_C, \label{eq:constructDelta2}. 
			%		\end{empheq}
		%	\end{subequations}
	and 
	\begin{equation}\label{eq:deltaCReq}
		\mbox{$C+\delta C$ is indefinite},   
	\end{equation} 
	and then derive the upper bound of $\eta$ from the upper bound of 
	$\max\left\{\frac{\|\delta A\|}{\|A\|},\frac{\|\delta C\|}{\|C\|}\right\}$. 
	We first note that we can safely discard the condition \eqref{eq:deltaCReq}. 
	This is due the fact that 
	when \eqref{eq:constructDelta} holds, using the same 
	arguments as in the proof of Theorem~\ref{thm:pert2devp}, 
	we can add infinitesimal perturbation to $\delta A, \delta C$ 
	to guarantee \eqref{eq:constructDelta} and \eqref{eq:deltaCReq} hold. 
	Since the backward error $\eta$ takes the infimum, 
	the quantity 
	$\max\left\{\frac{\|\delta A\|}{\|A\|},\frac{\|\delta C\|}{\|C\|}\right\}$ 
	is still an upper bound.  
	
	To find $\delta A$ and $\delta C$ satisfying
	\eqref{eq:constructDelta}, let us define
	\begin{equation*}\label{eq:tildeac}
		\widetilde{a} \equiv
		-\dfrac{\|A\|}{\|A\|+|\widehat\mu|\,\|C\|} (I-\widehat{x}\widehat{x}^H)r, \quad 
		\widetilde{c} \equiv 
		\dfrac{\sign (\widehat\mu)\|C\|}{\|A\|+|\widehat\mu|\,\|C\|}
		(I-\widehat{x}\widehat{x}^H)r,
	\end{equation*}
	where $\sign (\widehat\mu)=\widehat\mu/|\widehat\mu|$. 
	Then $\widetilde{a}$ and $\widetilde{c}$ are orthogonal to $\widehat{x}$ and
	satisfy
	\begin{equation*} %\label{eq:actilde}
		\widetilde{a}-\widehat{\mu}\,\widetilde{c} 
		= -(I-\widehat{x}\widehat{x}^H)\, r.
	\end{equation*}
	Next, let us define 
	$a \equiv (-\widehat{x}^Hr-\widehat{\mu}\gamma_C)\widehat{x}+\widetilde{a}$
	and $c \equiv -\gamma_C\widehat{x}+\widetilde{c}$.
	Then it holds that
	\[
	\left\{
	\begin{aligned}
		a - \widehat{\mu}c &= (\widetilde{a}-\widehat{\mu}\widetilde{c})-\widehat{x}^Hr\widehat{x}= -(I-\widehat{x}\widehat{x}^H)r-\widehat{x}^Hr\widehat{x} = -r,\\
		\widehat{x}^Hc &= -\gamma_C.
	\end{aligned}
	\right.
	\] 
	From the vectors $a$ and $c$, we 
	can construct Hermitian matrices $\delta A$ and $\delta C$, 
	say real constant multiples of Householder 
	reflections \cite[Theorem~2.1.13]{horn2012} satisfying
	$\delta A \widehat{x} = a$, 
	$\delta C\widehat{x} = c$
	and $\|\delta A\| = \|a\|$ and $\|\delta C\| = \|c\|$.
	Then $\delta A$ and $\delta C$ are desired matrices 
	satisfying \eqref{eq:constructDelta}. 
	
	For $\delta A$, by the definition of $r$, we have $-\widehat{x}^Hr-\widehat{\mu}\gamma_C = -\gamma_A$, and thus
	\begin{align}
		\dfrac{\|\delta A\|}{\|A\|} &= \dfrac{\|a\|}{\|A\|}
		= \dfrac{\|-\gamma_A\widehat{x}+\widetilde{a}\|}{\|A\|}
		= \dfrac{\sqrt{\gamma_A^2+\|\widetilde{a}\|^2}}{\|A\|} 
		= \sqrt{\left(\dfrac{|\gamma_A|}{\|A\|}\right)^2+
			\left(\dfrac{\|\widetilde{a}\|}{\|A\|}\right)^2} \nonumber \\
		&\le \sqrt{\left(\dfrac{|\gamma_A|}{\|A\|}\right)^2+
			\left(\dfrac{\|r\|}{\|A\|+|\widehat{\mu}|\, \|C\|}\right)^2} 
		\le \sqrt{2}\eta_1. \label{eq:deltaAbd0} 
	\end{align}
	By an analogous derivation, for $\delta C$, we have 
	\begin{equation} \label{eq:deltaCbd} 
		\dfrac{\|\delta C\|}{\|C\|}\le \sqrt{2}\eta_1.
	\end{equation} 
	{Combining the upper bounds \eqref{eq:deltaAbd0} and 
		\eqref{eq:deltaCbd}, we have
		$\eta \le \sqrt{2} \eta_1$.} 
	This completes the proof. 
\end{proof}
%-----------------------------------------------
%\newpage

\section{Applications}\label{sec:apps}

\subsection{Minmax of Rayleigh Quotients}\label{sec:minmax2RQs}

%\paragraph{RQminmax and EVopt.} 
In Sections 2.1 and 7.1 of Part I \cite{2DEVPI}, 
we discussed an application of the 2DEVP for
the minmax of Rayleigh quotients
(RQminmax) of $n\times n$ Hermitian matrices $A$ and $B$: 
\begin{equation}\label{prob:minmaxRQ}
%\min_{x\neq 0}\max\{\rho_{A}(x),\rho_{B}(x)\} \equiv 
\min_{x\neq 0}\max \left\{
\frac{x^HAx}{x^Hx},\, 
\frac{x^HBx}{x^Hx}\right\}
\end{equation}
In \cite[Theorem~2.1]{2DEVPI},  
we have shown that the RQminmax~\eqref{prob:minmaxRQ}
can be divided into three cases. For Case-I and Case-II,
one need to calculate the eigen-subspace corresponding to 
the minimum eigenvalues $\lambda_A$ and $\lambda_B$ of $A$ and $B$. 
This could be computational expensive when 
the multiplicity of $\lambda_A$ or $\lambda_B$ is larger than 1.
In the following we derive a computational-friendly variant 
of \cite[Theorem~2.1]{2DEVPI}, which only need to calculate 
an eigenvector corresponding to $\lambda_A$ and $\lambda_B$,  
regardless of the multiplicities of $\lambda_A$ and $\lambda_B$.

\begin{theorem}\label{thm:classfication1}  
Let $\lambda_A$ be the minimum eigenvalue 
and $x_A$ be a correponding eigenvector of $A$, 
$\lambda_B$ be the minimum eigenvalue 
and $x_B$ be a correponding eigenvector of $B$,
$\rho_A(x) = x^HA x/x^H x$ and  $\rho_B(x) = x^HB x/x^H x$ 
be the Rayleigh quotients of $A$ and $B$, respectively. 
\begin{enumerate}[I.]
\item\label{item:B1}
If $\lambda_A\geq \rho_B(x_A)$, then $x_A$ is a solution 
of the RQminmax \eqref{prob:minmaxRQ};
\item\label{item:B2}
If $\lambda_B\geq \rho_A(x_B)$, 
then $x_B$ is a solution of the RQminmax \eqref{prob:minmaxRQ};
\item\label{item:B3}
Otherwise, namely $\lambda_A < \rho_B(x_A)$ and $\lambda_B < \rho_A(x_B)$, 
let $\mu_*$ be an optimizer of the eigenvalue optimization 
problem (EVopt): 
\begin{equation}\label{eq:EVopt0}
\max\limits_{\mu\in \mathbb{R}}\lambda_{\min}(A-\mu C),
\end{equation}
and $V_{\mu_*}$ be the set of eigenvectors $x_*$ corresponding to 
$\lambda_{\min}(A-\mu_*C)$ and $x_*^HCx_*=0$, where $C = A - B$, 
then 
(a) $\mu_*\in[0,1]$, 
(b) $V_{\mu_*}\neq\emptyset$ and 
(c) any $x_*\in V_{\mu_*}$ is a solution of the RQminmax~\eqref{prob:minmaxRQ}.
\end{enumerate}
\end{theorem}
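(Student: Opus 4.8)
The plan is to reduce Theorem~\ref{thm:classfication1} to the three-case classification already established in \cite[Theorem~2.1]{2DEVPI} by showing that the hypotheses stated here in terms of a single eigenvector $x_A$ (resp.\ $x_B$) are \emph{equivalent} to the hypotheses there stated in terms of the full eigen-subspace of $\lambda_A$ (resp.\ $\lambda_B$). Write $\rho(x) = \max\{\rho_A(x),\rho_B(x)\}$ for the objective of \eqref{prob:minmaxRQ}, and note $\rho(x)\ge \lambda_A$ and $\rho(x)\ge\lambda_B$ for every $x\ne 0$, so both $\lambda_A$ and $\lambda_B$ are lower bounds on the optimal value $\rho_*$. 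First I would handle Case~\ref{item:B1}: if $\lambda_A\ge\rho_B(x_A)$, then $\rho(x_A)=\max\{\lambda_A,\rho_B(x_A)\}=\lambda_A\le\rho_*$, and combined with $\rho_*\le\rho(x_A)$ this forces $\rho(x_A)=\rho_*$, i.e.\ $x_A$ is optimal. Case~\ref{item:B2} is symmetric, swapping the roles of $A$ and $B$. These two arguments are short and self-contained; the subtlety is only in the ``Otherwise'' case.

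For Case~\ref{item:B3}, the goal is to verify that the negations $\lambda_A<\rho_B(x_A)$ and $\lambda_B<\rho_A(x_B)$ are exactly the conditions that place us in the third case of \cite[Theorem~2.1]{2DEVPI}, after which conclusions (a), (b), (c) can be quoted verbatim from Part~I. The key step is a monotonicity/consistency observation: the quantity $\lambda_A < \rho_B(x_A)$ holding for \emph{one} eigenvector $x_A$ of $\lambda_A$ should be shown to be independent of which eigenvector is chosen, and to coincide with the subspace-level condition used in Part~I (something like: there is no $x$ in the $\lambda_A$-eigenspace with $\rho_B(x)\le\lambda_A$, equivalently $\lambda_{\min}$ of $B$ restricted to that eigenspace exceeds $\lambda_A$ — or rather, the complementary statement that characterizes Cases I and II). Concretely, I would argue: if some eigenvector $x$ of $\lambda_A$ satisfied $\rho_B(x)\le\lambda_A$ then we would be in Case~\ref{item:B1} for that $x$ (by the Case~I argument above, $x$ would be a solution), so being in the ``Otherwise'' regime is equivalent to $\rho_B(x)>\lambda_A$ for \emph{all} $\lambda_A$-eigenvectors $x$, and in particular the validity of the hypothesis does not depend on the choice of $x_A$. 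This is what reconciles the single-vector hypothesis with the subspace hypothesis of \cite[Theorem~2.1]{2DEVPI}.

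With that equivalence in hand, conclusions (a)–(c) follow: since neither Case~I nor Case~II of \cite[Theorem~2.1]{2DEVPI} applies, we are in its third case, which asserts precisely that with $C=A-B$ the EVopt \eqref{eq:EVopt0} has an optimizer $\mu_*\in[0,1]$, that the set $V_{\mu_*}$ of $\lambda_{\min}(A-\mu_*C)$-eigenvectors $x_*$ with $x_*^HCx_*=0$ is nonempty, and that every such $x_*$ solves \eqref{prob:minmaxRQ}. One should double-check the normalization and sign conventions: $C=A-B$ gives $B=A-C$, so $\rho_B(x)=\rho_A(x)-x^HCx/x^Hx$, and $x^HCx=0$ is exactly the condition $\rho_A(x)=\rho_B(x)$ that makes the two Rayleigh quotients agree at the optimizer — this is the bridge between the 2DEVP constraint \eqref{eq:1b} and the minmax structure, and is already exploited in Part~I.

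The main obstacle I anticipate is the bookkeeping in the second paragraph: cleanly proving that the one-eigenvector hypothesis is choice-independent and matches the Part~I subspace condition, without accidentally circular reasoning (we use the Case~I argument to show choice-independence, then use choice-independence to invoke Part~I's third case). Making the logical order explicit — (i) prove the Case~I/II implications directly; (ii) deduce that failing both, for the specific $x_A,x_B$, is equivalent to failing them for \emph{all} minimal eigenvectors; (iii) quote Part~I's third case — is the crux; the rest is routine.
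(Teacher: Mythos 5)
Cases~\ref{item:B1} and~\ref{item:B2} are handled exactly as in the paper, so those parts are fine. The problem is with Case~\ref{item:B3}, and specifically with the ``equivalence'' you rely on.

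Your step (ii) claims that failing the single-vector hypotheses for the \emph{specific} $x_A,x_B$ given is equivalent to failing the subspace-level hypotheses of \cite[Theorem~2.1]{2DEVPI}, i.e.\ that $\lambda_A<\rho_B(x_A)$ for one eigenvector $x_A$ forces $\rho_B(x)>\lambda_A$ for \emph{all} $x$ in the $\lambda_A$-eigenspace. That implication is false when $\lambda_A$ has multiplicity greater than one. Take $A=\diag(0,0,1)$, $B=\diag(-1,1,0)$: the $\lambda_A$-eigenspace is $\Span\{e_1,e_2\}$, and $\rho_B(e_2)=1>0=\lambda_A$ while $\rho_B(e_1)=-1<\lambda_A$. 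With the (perfectly legitimate) choice $x_A=e_2$ you land in Case~\ref{item:B3} of this theorem, yet Part~I's subspace-level Case~I condition also holds, so Part~I's Theorem~2.1(III) simply does not apply and cannot be quoted. This is not an accidental bookkeeping slip: the stated purpose of this theorem is precisely to avoid computing the full eigen-subspace, which means the hypotheses genuinely do \emph{not} reduce to Part~I's and the overlap cases have to be argued from scratch. The paper does exactly that: it introduces $\theta_A=\lambda_{\min}(S_B^HAS_B)$ and $\theta_B=\lambda_{\min}(S_A^HBS_A)$, splits Case~\ref{item:B3} into the subcase $\lambda_A<\theta_B$ and $\lambda_B<\theta_A$ (where Part~I's Theorem~2.1(III) applies and can be quoted, as you intended) and the complementary subcase where, say, $\lambda_A\geq\theta_B$; in the latter it gives a direct argument using \cite[Theorems~4.1 and~4.5]{2DEVPI} to show that the one-sided derivatives of $g(\mu)=\lambda_{\min}(A-\mu C)$ at $\mu=0$ and $\mu=1$ force $0$ to be an optimizer, that $g$ attains the value $\lambda_A$ there, and that any $x_*\in V_{\mu_*}$ achieves the minmax value $\lambda_A$. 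Your outline skips this second subcase entirely, and it is the essential content of the theorem; filling it requires the concavity and one-sided-derivative machinery from Part~I rather than a citation of Theorem~2.1(III).
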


\begin{proof}
For Case-\ref{item:B1}, we note that for any $x\neq0$, 
\[
\max\{\rho_A(x),\rho_B(x)\} \geq \rho_A(x)\geq\lambda_A.
\]
On the other hand,  
\[
\max\{\rho_A(x_A),\rho_B(x_A)\}
= \lambda_A.
\] 
Thus $x_*=x_A$ is a solution of the RQminmax~\eqref{prob:minmaxRQ}. 
%
%since $x_A$ is an eigenvector corresponding to $\lambda_A$, 
%$x_A\in \Span\{S_A\}$, i.e., $x_A = S_A w$ for some $w$, 
%\Red{where $S_A$ is an orthonormal basis of the eigen-subspace 
%of $\lambda_A$.}  
%Thus 
%\begin{equation}\label{eq:lamtheta}
%\lambda_A \geq \rho_B(x_A) = x_A^H B x_A = w^H S^H_A B S_A w 
%\geq \theta_B,
%\end{equation}
%where the first inequality is from 
%the assumption of Case-\ref{item:B1},
%and the second inequality is from the fact that $\theta_B$
%is the minimum eigenvalue of $S^H_A B S_A$. 
%Since $\lambda_A \geq \theta_B$, 
%by \sout{Theorem~\ref{thm:classfication0}-\ref{item:A1},} 
%\Red{Theorem 2.1(I) of Part I \cite{2DEVPI}}, 
%$x_A$ is the solution of RQminmax~\eqref{prob:minmaxRQ}.

Case-II can be proven by exchanging the roles of 
$A$ and $B$ in the proof of Case-I.
 
For Case-\ref{item:B3}, we need to prove that under the conditions 
$\lambda_A < \rho_B(x_A)$ and $\lambda_B < \rho_A(x_B)$, 
we have the results (a), (b) and (c). 
To that end, let $S_A$ and $S_B$ be orthonormal bases of the eigensubspaces of $\lambda_A$ and $\lambda_B$, respectively, and denote $\theta_A=\lambda_{\min}(S^H_B A S_B)$, $\theta_B = \lambda_{\min}(S^H_A B S_A)$. Let us divide Case-\ref{item:B3} into 
subcases based on the relation between 
$\lambda_A$ ($\lambda_B$) and $\theta_B$ ($\theta_B$). 
If $\lambda_A < \theta_B$ 
and $\lambda_B < \theta_A$, 
then it belongs to the case of Theorem 2.1(III) of Part I \cite{2DEVPI}, and 
$$
\arg\min\limits_{x\neq0}\Big( \max\{\rho_A(x),\rho_B(x)\}\Big) = V_{\mu_*}. 
$$
The results (b) and (c) are immediately followed. 
The result (a) is contained in \cite[Theorem~7.1]{2DEVPI}, namely
\[
\arg\max\limits_{\mu\in \mathbb{R}}g(\mu)
= \arg\max\limits_{\mu\in (0,1)} g(\mu), 
\]
where $g(\mu) = \lambda_{\min}(A-\mu C)$. 
Consequently, we only need to consider 
the subcase where the inequalities 
$\lambda_A < \theta_B$ and $\lambda_B < \theta_A$
do not hold simultaneously.
This implies that at least one of the following conditions holds: 
(i) $\lambda_A \geq \theta_B$;  
(ii) $\lambda_B \geq \theta_A$. 
Let us consider (i) in the following.  (ii) can be shown analogously. 

By the condition under Case~\ref{item:B3}, i.e.,
$\lambda_A<\rho_B(x_A)$ and $\lambda_B<\rho_A(x_B)$, 
we have
\begin{equation}\label{eq:caseIIIcond}
-x_A^HCx_A=-\lambda_A+\rho_B(x_A)>0, \quad 
-x_B^HCx_B=-\rho_A(x_B)+\lambda_B<0. 
\end{equation}
Note that $x_A$ and $x_B$ are also eigenvectors of $A-0\cdot C=A$ 
and $A-1\cdot C=B$, respectively. Thus 
by \cite[Theorem~4.5]{2DEVPI}, the inequalities in \eqref{eq:caseIIIcond} imply that 
\begin{equation}\label{eq:dq}
g^{'(-)}(0)=\lambda_{\max}(-S_A^HCS_A)\geq -x_A^HCx_A>0, \quad g^{'(+)}(1)=\lambda_{\min}(-S_B^HCS_B)\leq -x_B^HCx_B<0.
\end{equation}
Let $\mu_*$ be an optimizer of EVopt~\eqref{eq:EVopt0}, 
then by \eqref{eq:dq} and the concavity of 
$g(\mu)$, we conclude that $\mu_*\in[0,1]$.  
This completes the proof of the result (a).

For result (b), note that $(\mu_*,\lambda_{\min}(A-\mu_*C))$ 
is a 2D eigenvalue of $(A,C)$ according to 
\cite[Theorem~5.1]{2DEVPI}. Then the associated 2D-eigenvectors 
belong to $V_{\mu_*}$ and thus we obtain the result (b).

To prove the result (c), we first calculate the optimal value of
the EVopt~\eqref{eq:EVopt0}. 
Since $\lambda_A \geq \theta_B$, by denoting $z_B$ as the 
eigenvector of $S_A^HBS_A$ corresponding to $\theta_B$ and 
the definition of $S_A$, we have
$$
\rho_A(S_Az_B) = \lambda_A \geq \theta_B = \rho_B(S_Az_B).
$$ 
Let $\tilde{x}_A = S_Az_B$. Then 
$-\widetilde{x}_A^HC\widetilde{x}_A\leq0$ and thus 
by \cite[Theorem~4.5]{2DEVPI},
\begin{equation}\label{eq:dq2}
g_{+}'(0)=\lambda_{min}(-S_A^HCS_A)\leq -\widetilde{x}_A^HC\widetilde{x}_A\leq0.
\end{equation}
According to \eqref{eq:dq}, \eqref{eq:dq2} and the concavity of 
$g(\mu)=\lambda_{\min}(A-\mu C)$ (see \cite[Theorem~4.1]{2DEVPI}), 
$0$ is an optimizer of EVopt~\eqref{eq:EVopt0} 
and thus
\begin{equation}\label{eq:gmax}
\max\limits_{\mu\in\mathbb{R}}\lambda_{\min}(A-\mu C) =\lambda_A.
\end{equation} 
Now for any $x_*\in V_{\mu_*}$, we have 
\[
\rho_B(x_*) = \rho_A(x_*) = \lambda_{\min}(A-\mu_*C) 
= \max\limits_{\mu\in\mathbb{R}}\lambda_{\min}(A-\mu C) 
= \lambda_A = \min\max\{\rho_A(x),\rho_{B}(x)\},
\]
where the first equality results from  $x_*^HCx_*=0$, 
the second equality results from the fact that $\rho_A(x_*) = \rho_{A-\mu_*C}(x_*)$ and $x_*$ is an eigenvector corresponding to $\lambda_{\min}(A-\mu_*C)$, 
the third equality comes from $\mu_*$ is an optimizer,  
the fourth equality results from \eqref{eq:gmax}
and the last equality holds according to 
Theorem 2.1(I) of Part I \cite{2DEVPI} 
as $\theta_B\leq\lambda_A$. 
Thus $x_*$ is the solution to the RQminmax~\eqref{prob:minmaxRQ}. 
This completes the proof of the result (c).
\end{proof}

By Defintion~5.1 of Part I \cite{2DEVPI}, 
for Case~\ref{item:B3} of Theorem~\ref{thm:classfication1}, 
we know that $(\mu_*,\lambda_*)$ with $\lambda_*=\lambda_{\min}(A-\mu_*C)$ 
is the minimum 2D-eigenvalue of $(A,C)$. 
On the other hand, by the definition of $V_{\mu_*}$, up to a scaling, 
$x_*\in V_{\mu_*}$ if and only if $x_*$ is a 2D-eigenvector associated 
with $(\mu_*,\lambda_*)$. Thus the RQminmax~\eqref{prob:minmaxRQ}, 
excluding Cases~\ref{item:B1} and \ref{item:B2} 
in Theorem~\ref{thm:classfication1}, turns to 
calculating a minimum 2D-eigenvalue 
and the corresponding 2D-eigenvector of $(A, C)$.
 
Based on the fact that
$\mu_*$ of the minimum 2D-eigenvalue $(\mu_*,\lambda_*)$
must be in $[0,1]$, we can derive a combination of the bisection search and 
the 2DRQI (Algorithm~\ref{alg:2dRQIps}).
Starting with the search interval $[a,b] = [0, 1]$
of the EVopt~\eqref{eq:EVopt0}, let 
\begin{equation} \label{eq:initials}
\mu_0=\frac{1}{2}(a+b) 
\quad \mbox{and} \quad
\lambda_0= \lambda_{\min}(A-\mu_0C)
\end{equation} 
and $x_0$ be the one as recommended
for the 2DRQI (Algorithm~\ref{alg:2dRQIps}). 
Then we can use the 2DRQI with the initial 
$(\mu_0, \lambda_0, x_0)$ to 
find a 2D-eigentriplet $(\widehat{\mu},\widehat{\lambda}, \widehat{x})$ 
of $(A,C)$. 

If $\widehat{\lambda} = \lambda_{\min}(A - \widehat{\mu} C)$,  
then according to \cite[Corollary 5.1]{2DEVPI}, \[
\widehat{\lambda} = 
\max\limits_{\mu\in \mathbb{R}}\lambda_{\min}(A-\mu C).
\]
Thus $\widehat{\mu}$ is an optimizer of EVopt \eqref{eq:EVopt0} 
and $\widehat{x}$ is the solution of RQminmax~\eqref{prob:minmaxRQ}.

If $\widehat{\lambda} \neq  \lambda_{\min}(A - \widehat{\mu} C)$, 
or the 2DRQI does not converge, then we can use the concavity 
of $g(\mu) = \lambda_{\min}(A-\mu C)$ (see \cite[Theorem~4.1]{2DEVPI})
to bisect the interval $[a,b]$ and run the 2DRQI with a new initial 
$(\mu_0, \lambda_0, x_0)$. 
This bisection seach strategy works due to the facts that 
\begin{itemize} 
	\item 
	if $(x^{(n)})^HCx^{(n)}\leq0$, where $x^{(n)}$ is 
	an eigenvector corresponding to 
	$\lambda_0 = \lambda_{\min}(A - \mu_0 C)$, then 
	by \cite[Theorem~4.5]{2DEVPI}, 
	$g_{-}'(\mu_0)=\lambda_{\max}(-X_0(\mu_0)^HCX_0(\mu_0))\geq0$, 
	where $X_0(\mu)$ is an orthonormal 
	basis of the eigen-subspace of $\lambda_{\min}(A-\mu C)$, 
	and there is an optimizer $\mu_*$ of the EVopt~\eqref{eq:EVopt0} 
	such that $\mu_*\geq\mu_0$. Consequently, we set $a = \mu_0$ to 
	half the search interval. 
	
	\item 
	if $(x^{(n)})^HCx^{(n)}>0$, then by \cite[Theorem~4.5]{2DEVPI},
	$g_{+}'(\mu_0)=\lambda_{\min}(-X_0(\mu_0)^HCX_0(\mu_0))<0$ and 
	there is an optimizer $\mu_*$of the EVopt~\eqref{eq:EVopt0} such that 
	$\mu_*\leq\mu_0$. Consequently, we set $b = \mu_0$ to half 
	the search interval.
\end{itemize} 

%\paragraph{Algorithm.} 
A combination of the 2DRQI (Algorithm~\ref{alg:2dRQIps}) 
and the bisection search described above 
is summarized in Algorithm~\ref{alg:minmax2RQs}
for solving the RQminmax~\eqref{prob:minmaxRQ}, 
where in line 9 we use whether
\[
|\widehat{\lambda}-\lambda_{\min}(A-\widehat{\mu}C)|
=
|\widehat{\lambda}-\lambda_{\min}\left((1-\widehat{\mu})A+\widehat{\mu}B\right)|
< {\tt reltol}\cdot (|1-\widehat{\mu}|\|A\| + |\widehat{\mu}|\|B\|)
\] 
to numerically check whether 
$\widehat{\lambda}=\lambda_{\min}(A-\widehat{\mu}C)$.

\begin{algorithm} %[htbp] %[H]
	\caption{Minimax of two RQs} \label{alg:minmax2RQs}
	\begin{algorithmic}[1]
		\REQUIRE{
			$n$-by-$n$ Hermitian matrices $A$ and $B$, 
			tolerance values {\tt reltol} and {\tt backtol}.
		} 
		
		\ENSURE{
			approximate solution $\widehat{x}$ and the optimal value $\widehat{\lambda}$ 
			of RQminmax \eqref{prob:minmaxRQ}}
		
		\STATE\label{step:A1} 
		compute a minimum eigenpair $(\lambda_A,x_A)$ of $A$.
		If $\lambda_A\geq\rho_{B}(x_A)$, then return
		$(\widehat{\lambda}, \widehat{x}) =(\lambda_A,x_A)$.
		
		\STATE \label{step:A2} 
		compute a minimum eigenpair $(\lambda_B,x_B)$ of $B$.
		If $\lambda_B\geq\rho_{A}(x_B)$, then return
		$(\widehat{\lambda}, \widehat{x}) =(\lambda_B,x_B)$.
		
		\STATE 
		set $[a,b] = [0,1]$;
		
		%\LOOP {until .... } 		
		\FOR{$k=0,1,2,\ldots,$} 
		
		\STATE \label{step:mu0}  
		set $\mu_0={(a+b)}/{2}$;
		
		\STATE  \label{step:lambda0} 
		compute two smallest eigenpairs 
		$(\lambda_n, x^{(n)})$,  
		$(\lambda_{n-1}, x^{(n-1)})$ 
		of $A-\mu_0C$;
		
		\STATE \label{step:10} 
		compute the minmum 2D-Ritz triplet $(\nu, \theta, z)$
		of $(Z^HAZ,Z^HCZ)$, where
		$Z=\begin{bmatrix} x^{(n-1)} & x^{(n)}\end{bmatrix}$;
		
		\STATE\label{line:initial} apply the 2DRQI (Algorithm~\ref{alg:2dRQIps}) 
		with the initial 
		$(\mu_0,\lambda_0 = \lambda_n, x_0 = Z z)$ 
		and the backward error tolerance {\tt backtol}.
		
		\IF{2DRQI converges to 
			$(\widehat{\mu}, \widehat{\lambda},\widehat{x})$ 
			and 
			$|\widehat{\lambda}-\lambda_{\min}(A-\widehat{\mu}C)|<
			{\tt reltol}\cdot {(|1-\widehat{\mu}|\|A\| + |\widehat{\mu}|\|B\|)}$
		}   
		\label{step:check}	
		
		\RETURN\label{step:goodReturn}  
		$(\widehat{\lambda},\widehat{x})$.

		\ELSE 
		
		\IF{$(x^{(n)})^H Cx^{(n)}\leq0$}
		
		\STATE 
		update $a = \mu_0$.
		
		\ELSE
		
		\STATE 
		update $b = \mu_0$.
		
		\ENDIF
		
		\ENDIF
		
		\ENDFOR			
		%\ENDLOOP
	\end{algorithmic}
\end{algorithm}

For a robust implementation, we need to deal with the extreme case 
where the 2DRQI (Algorithm~\ref{alg:2dRQIps}) does not converge 
to the correct 2D eigentriplet even when $b-a$ is sufficiently small. 
Specifically, we terminate the outer iteration when 
$b-a<{\tt abstol}$, where ${\tt abstol}$ is a prescribed tolerance. 
According to our analysis, $\mu_*\in[a,b]$ and thus 
$\widehat{\mu}=(a+b)/2, \widehat{\lambda}=\lambda_{\min}(A-\widehat{\mu}C)$ 
is already sufficiently close to the optimizer of the EVopt~\eqref{eq:EVopt0}. 
The remaining issue is how to recover an approximation $\widehat{x}_*$ to 
the solution of the RQminmax~\eqref{prob:minmaxRQ}, namely
$\widehat{x}_*^HC\widehat{x}_* \approx 0$.   

To that end, we compute eigenvectors $x_a, x_b$ associated 
with the minimum eigenvalue of $A-aC$ and $A-bC$. After proper scaling 
we can assume $\|x_a\|=\|x_b\|=1$, and $x_a^Hx_b$ is real and non-negative. 
If $x_a^HCx_a=0$ or $x_b^HCx_b=0$ or $x_a\parallel x_b$, i.e., 
$x_a = \alpha x_b$ for a constant $\alpha$, 
we set $\widehat{x}=x_a$ or $\widehat{x}=x_b$ or $\widehat{x}=x_a$. 
Otherwise, we denote $q_a=x_a$, 
$q_b=\frac{(I-x_ax_a^H)x_b}{\|(I-x_ax_a^H)x_b\|}
=\frac{(I-x_ax_a^H)x_b}{\sqrt{1-(x_a^Hx_b)^2}}$ and 
$u(\theta)=\cos\theta q_a+\sin\theta q_b$. We find $\widehat{\theta}$ 
that minimizes $|u(\theta)^HCu(\theta)|$ among $[0, \arccos(x_a^Hx_b)]$. 
We then set $\widehat{x}=u(\widehat{\theta})$ and return 
$(\widehat{\lambda},\widehat{x})$ as the solution 
of the RQminmax~\eqref{prob:minmaxRQ}. 
The following proposition shws that this strategy is valid 
under mild conditions.

\begin{proposition}\label{prop:strategy}
If $\lambda_{\min}(A-aC)$ and $\lambda_{\min}(A-bC)$ are simple eigenvalues, 
then 
\begin{itemize} 
\item[(a)] $\widehat{x}^HA\widehat{x}= \widehat{x}^HB\widehat{x}$.
\item[(b)] $\|(A-\mu_*C)\widehat{x}-\lambda_*\widehat{x}\|\leq 6(b-a)\|C\|$.
\item[(c)] $|\widehat{x}^HA\widehat{x}-\lambda_*|\leq 6(b-a)\|C\|$.
\end{itemize} 
\end{proposition}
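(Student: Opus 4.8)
The plan is to exploit the structure of the construction of $\widehat{x}$ as a unit vector in $\Span\{x_a, x_b\}$ chosen to (nearly) minimize $|u(\theta)^H C u(\theta)|$, together with the fact that $x_a$ and $x_b$ are eigenvectors of $A - aC$ and $A - bC$ respectively for their minimum (simple) eigenvalues, and that $|b - a| = O({\tt abstol})$ is small. First I would handle part (a): since $u(\theta)^H C u(\theta)$ is a continuous real-valued function of $\theta$ on $[0, \arccos(x_a^H x_b)]$, I claim it changes sign (or vanishes) on this interval, so that the minimizer $\widehat{\theta}$ of $|u(\theta)^H C u(\theta)|$ in fact achieves $u(\widehat{\theta})^H C u(\widehat{\theta}) = 0$, i.e. $\widehat{x}^H C \widehat{x} = 0$; since $C = A - B$, this is exactly $\widehat{x}^H A \widehat{x} = \widehat{x}^H B \widehat{x}$. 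To see the sign change, I would use $g_-'(a) \ge -x_a^H C x_a$ and $g_+'(b) \le -x_b^H C x_b$ type inequalities (from \cite[Theorem~4.5]{2DEVPI}) together with the fact that $\mu_* \in [a,b]$ is an interior optimizer of the concave function $g$, forcing $-x_a^H C x_a \ge 0 \ge -x_b^H C x_b$; the edge cases $x_a^H C x_a = 0$, $x_b^H C x_b = 0$, or $x_a \parallel x_b$ are exactly the ones where $\widehat{x}$ is set directly and (a) is immediate.

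Next I would establish part (b). Write $\widehat{x} = \cos\widehat{\theta}\, q_a + \sin\widehat{\theta}\, q_b$. The key estimates are: $(A - aC)x_a = \lambda_{\min}(A-aC) x_a$ and $(A - bC)x_b = \lambda_{\min}(A-bC) x_b$, so that $(A - \mu_* C)x_a = \lambda_{\min}(A-aC)x_a + (a - \mu_*) C x_a$, and likewise for $x_b$; since $|a - \mu_*|, |b - \mu_*| \le b - a$, each such vector is within $(b-a)\|C\|$ of being an eigenvector of $A - \mu_* C$ with eigenvalue close to $\lambda_* = \lambda_{\min}(A - \mu_* C)$. The remaining work is bookkeeping: express $q_b = (x_b - (x_a^H x_b) x_a)/\sqrt{1 - (x_a^H x_b)^2}$, bound the relevant eigenvalue differences $|\lambda_{\min}(A - aC) - \lambda_*|$ and $|\lambda_{\min}(A - bC) - \lambda_*|$ by $(b-a)\|C\|$ using Lipschitz continuity of eigenvalues (Weyl), and combine. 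The constant $6$ is generous enough to absorb the $1/\sqrt{1 - (x_a^H x_b)^2}$ factor and the triangle-inequality overhead; I would not optimize it. Part (c) then follows from (b): $|\widehat{x}^H A \widehat{x} - \lambda_*| = |\widehat{x}^H(A - \mu_* C)\widehat{x} - \lambda_*|$ because $\widehat{x}^H C \widehat{x} = 0$ by (a), and $|\widehat{x}^H(A - \mu_*C)\widehat{x} - \lambda_*| = |\widehat{x}^H\big((A - \mu_* C)\widehat{x} - \lambda_* \widehat{x}\big)| \le \|(A - \mu_* C)\widehat{x} - \lambda_* \widehat{x}\| \le 6(b-a)\|C\|$ by Cauchy--Schwarz and part (b).

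The main obstacle I anticipate is part (b), specifically controlling the coefficient blow-up when $x_a$ and $x_b$ are nearly parallel: the Gram--Schmidt vector $q_b$ carries a factor $1/\sqrt{1 - (x_a^H x_b)^2}$ which is large in that regime, and one must check that it is compensated. The resolution should be that when $x_a^H x_b \approx 1$, the eigenspaces are close by the Davis--Kahan $\sin\theta$ theorem (using that both eigenvalues are simple and that the perturbation $(A - aC) - (A - bC) = (b-a)C$ has norm $O(b-a)$, with the relevant spectral gap bounded below — this is where simplicity of $\lambda_{\min}(A-aC)$ and $\lambda_{\min}(A-bC)$ is used), so $\widehat{x}$ stays close to both $x_a$ and $x_b$ regardless of how the Gram--Schmidt decomposition is scaled; alternatively, one observes that $u(\widehat\theta)^H C u(\widehat\theta) = 0$ means $\widehat x$ lies on the segment in direction-space between $x_a$ and $x_b$, and for any unit $\widehat x$ in $\Span\{x_a,x_b\}$ one has $\|(A - \mu_*C)\widehat x - \lambda_*\widehat x\|$ bounded by a quantity depending only on the two residuals and the gap, which can be made clean. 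I would present whichever of these two routes yields the constant $6$ with the least fuss.
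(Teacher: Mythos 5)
Your overall framework matches the paper's proof: handle the degenerate cases ($x_a^H C x_a = 0$, $x_b^H C x_b = 0$, $x_a \parallel x_b$) separately, use concavity of $g$ plus \cite[Theorem~4.5]{2DEVPI} to get $x_a^H C x_a < 0 < x_b^H C x_b$ in the main case, invoke the intermediate value theorem for (a), and deduce (c) from (b) via $\widehat{x}^H C \widehat{x} = 0$ and Cauchy--Schwarz. Those parts are fine.

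The gap is in part (b), and it is exactly the obstacle you flagged. Your first proposed resolution (Davis--Kahan) does not work here: that theorem inserts a spectral gap into the denominator, and the proposition asserts a bound $6(b-a)\|C\|$ that is \emph{gap-free} — simplicity of $\lambda_{\min}(A-aC)$ and $\lambda_{\min}(A-bC)$ is assumed only so that $x_a,x_b$ are well-defined up to phase, not to supply a quantitative gap. Your second resolution is in the right direction but stops at ``which can be made clean,'' and the claim that the bound holds for \emph{any} unit vector in $\Span\{x_a,x_b\}$ is false: when $x_a^Hx_b\to 1$, the Gram--Schmidt vector $q_b$ can have a large residual, and a generic combination $\cos\theta\,q_a+\sin\theta\,q_b$ need not be a near-eigenvector. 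What saves the bound is precisely that $\widehat\theta$ is confined to $[0,\theta_b]$ with $\theta_b=\arccos(x_a^Hx_b)\le\pi/2$. Writing $(A-\mu_*C)x_a=\lambda_*x_a+r_a$ and $(A-\mu_*C)x_b=\lambda_*x_b+r_b$ with $\|r_a\|,\|r_b\|\le 2(b-a)\|C\|$ (each via $|\mu_*-a|\le b-a$, $|\mu_*-b|\le b-a$ plus Weyl), one gets
\[
(A-\mu_*C)\widehat x = \lambda_*\widehat x + \cos\widehat\theta\,r_a + \sin\widehat\theta\,\frac{r_b - (x_a^Hx_b)\,r_a}{\sqrt{1-(x_a^Hx_b)^2}},
\]
and the monotonicity of $\sin$ on $[0,\pi/2]$ gives $\sin\widehat\theta\le\sin\theta_b=\sqrt{1-(x_a^Hx_b)^2}$, which \emph{exactly cancels} the offending denominator. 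With $|x_a^Hx_b|\le 1$ this yields $\|\widehat r\|\le \|r_a\|+\|r_b-(x_a^Hx_b)r_a\|\le 2\|r_a\|+\|r_b\|\le 6(b-a)\|C\|$. That elementary cancellation — not Davis--Kahan — is the missing step, and it is where the constant $6$ comes from.
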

\begin{proof}
First consider the case $x_a^HCx_a=0$. 
Let $\widehat{x}=x_a$, then the result (a) holds. The result (c) 
holds due to the fact that 
\[
|\widehat{x}^HA\widehat{x}-\lambda_*|
=|\lambda_{\min}(A-aC)-\lambda_{\min}(A-\mu_*C)|\leq (b-a)\|C\|,
\]
where the last inequality comes 
from Weyl~theorem~\cite[p.\,203, Corollary 4.10]{Stewart1990Sun}. 
The result (b) holds since
\begin{equation}\label{eq:propstrategy1}
\begin{aligned}
(A-\mu_*C)x_a-\lambda_*x_a &= (A-aC)x_a-
\lambda_{\min}(A-aC)x_a - (\mu_*-a)Cx_a - (\lambda_*-\lambda_{\min}(A-aC))x_a\\
	&=- (\mu_*-a)Cx_a - (\lambda_*-\lambda_{\min}(A-aC))x_a
\end{aligned}
\end{equation}
and 
\begin{equation}\label{eq:propstrategy2}
\|(A-\mu_*C)x_a-\lambda_*x_a\| \leq 2(b-a)\|C\|.
\end{equation}

The argument for the case $x_b^HCx_b=0$ is similar. 
The remaining is the case where both $x_a^HCx_a$ and $x_b^HCx_b$ are nonzero.

According to the concavity of $g(\mu)=\lambda_{\min}(A-\mu C)$ and 
$a\leq\mu_*\leq b$, $g'(a)\geq0$ and $g'(b)\leq0$. Thus we have 
$x_a^HCx_a\leq0$ and $x_b^HCx_b\geq0$. Since both $x_a^HCx_a$ and 
$x_b^HCx_b$ are nonzero, we have $x_a^HCx_a<0$ and $x_b^HCx_b>0$. 
This implies $x_a\nparallel x_b$, and thus $q_a,q_b$ are well-defined.

Note that $q_a^Hq_b=0$ and $\|q_a\|=\|q_b\|=1$. Then $\|u(\theta)\|=1$ for all $\theta$. Furthermore, let $\theta_a=0$, $\theta_b=\arccos(x_a^Hx_b)$. Straight calculation shows that $u(\theta_a)=x_a$ and $u(\theta_b) = x_b$. 

Define function $h(\theta) = u(\theta)^HCu(\theta)$. Then 
$h(\theta_a)=x_a^HCx_a<0$ and $h(\theta_b)=x_b^HCx_b>0$. By continuity, 
there exists $\widehat{\theta}\in(\theta_a,\theta_b)$ such that 
$h(\widehat{\theta})=0$. Thus 
$\min\limits_{\theta\in[\theta_a,\theta_b]} |u(\theta)^HCu(\theta)|=0$ 
and we have $\widehat{x}^HC\widehat{x}=0$. The result (a) is obtained. 

We next show $\widehat{x}$ lies approximately in the eigen-subspace of 
$A-\mu_*C$ in the backward sense.  
Denote $r_a,r_b$ such that 
$(A-\mu_*C)x_a = \lambda_*x_a + r_a$, $(A-\mu_*C)x_b = \lambda_*x_b + r_b$. 
We have
\begin{equation}\label{eq:propstrategy3}
	\begin{aligned}
		(A-\mu_*C)\widehat{x}&=(A-\mu_*C)\left(\cos\widehat{\theta}x_a+\sin\widehat{\theta}\frac{(I-x_ax_a^H)x_b}{\sqrt{1-(x_a^Hx_b)^2}}\right)\\
		&=\cos\widehat{\theta}(\lambda_*x_a+r_a)+\sin\widehat{\theta}\frac{\lambda_*x_b+r_b-x_a^Hx_b(\lambda_*x_a+r_a)}{\sqrt{1-(x_a^Hx_b)^2}}\\
		&=\lambda_*\widehat{x}+\cos\widehat{\theta}r_a+\sin\widehat{\theta}\frac{r_b-r_ax_a^Hx_b}{\sqrt{1-(x_a^Hx_b)^2}}\\
		&\equiv\lambda_*\widehat{x}+\widehat{r},
	\end{aligned}
\end{equation}
with
\begin{equation}
	\begin{aligned}
		\|\widehat{r}\|&\leq\|r_a\|+\sin\theta_b\frac{\|r_b-r_ax_a^Hx_b\|}{\sqrt{1-(x_a^Hx_b)^2}}\\
		&=\|r_a\|+\|r_b-r_ax_a^Hx_b\|\\
		&\leq2\|r_a\|+\|r_b\|.
	\end{aligned}
\end{equation}
Note that using the same argument in \eqref{eq:propstrategy1} and \eqref{eq:propstrategy2}, we can obtain $\|r_a\|\leq 2(b-a)\|C\|$ and $\|r_b\|\leq 2(b-a)\|C\|$. Thus we have
\[\|\widehat{r}\|\leq 6(b-a)\|C\|\]
and we reach the result (b). Multiplying $\widehat{x}^H$ on the 
left of \eqref{eq:propstrategy3} leads to the result (c).
\end{proof}

Numerical examples for large scale RQminmax~\eqref{prob:minmaxRQ} arising
from signal processing will be presented in Section~\ref{sec:Experiments}.

%----------------------------------------------------------
%\newpage

\subsection{The distance to instability} \label{sec:dti} 

As discussed in Section 2.2 of Part I \cite{2DEVPI}, 
the distance to instability (DTI) of a stable matrix 
$\widehat{A} \in \mathbb{C}^{n\times n}$ can be recast as
the eigenvalue optimization as follows: 
\begin{equation} \label{eq:47}
\beta(\widehat{A}) 
\equiv \min\left\{\|E\| \mid \widehat{A} + E\ \mbox{is unstable}\right\}  
 = \min\limits_{\mu\in\mathbb{R}} \lambda_n(\mu), 
\end{equation} 
where $\lambda_n(\mu)$ is the smallest positive eigenvalue
of $A - \mu C$ with 
\begin{equation} \label{eq:defA}
A = \begin{bmatrix}
&\widehat{A}\\
\widehat{A}^H&
\end{bmatrix} 
\quad \mbox{and} \quad
C = \begin{bmatrix}
& jI\\
-jI &
\end{bmatrix}. 
\end{equation}
Furthermore, in Section 7.2 of Part I \cite{2DEVPI}, we know that
if $\mu_*$ is an optimizer of \eqref{eq:47}, then 
$(\mu_*, \lambda_n(\mu_*) = \beta(\widehat{A}) )$
is a 2D-eigenvalue of $(A,C)$ and $\mu_*\in\left[-\|A\|,\|A\|\right]$, 
and 
\begin{subequations}  \label{eq:beta2D} 
\begin{align}
\beta(\widehat{A}) 
&=\min\{\lambda \mid 
	\mbox{$(\mu,\lambda)$ is a 2D-eigenvalue of $(A,C)$ and $\lambda>0$}
	\} \label{eq:beta2D:1} \\
&=-\max\{\lambda \mid 
	\mbox{$(\mu,\lambda)$ is a 2D-eigenvalue of $(A,C)$ and $\lambda<0$}
	\} \label{eq:beta2D:2} \\
&=\min\{|\lambda| \mid 
	\mbox{$(\mu,\lambda)$ is a 2D-eigenvalue of $(A,C)$} \}.
	 \label{eq:beta2D:3}
\end{align}
\end{subequations} 
In addition, we note that by the structure of $A$ and 
$C$ in \eqref{eq:defA} and
equations~\eqref{eq:1b} and \eqref{eq:1c} of the 2DEVP \eqref{2deig},
the corresponding 2D-eigenvector  
$x_*=\left[\begin{smallmatrix}
x_1\\
x_2
\end{smallmatrix}\right]$ 
of $(\mu_*, \lambda_n(\mu_*))$ 
must obey 
\begin{equation}\label{eq:xhatcondition0}
\imag(x_1^Hx_2)=0 
\quad \mbox{and} \quad 
x_1^Hx_1 + x_2^Hx_2=1.
\end{equation}
Meanwhile, from equation~\eqref{eq:1a}, 
\[
\begin{aligned}
\widehat{A}x_2
&=\mu_* jx_2 + \beta(\widehat{A}) x_1,\\
\widehat{A}^Hx_1
&=-\mu_* jx_1 + \beta(\widehat{A}) x_2.
\end{aligned}\]
Since $x_1^H\widehat{A}x_2 =\overline{x_2^H\widehat{A}^Hx_1}$, we have
\[
\mu_* jx_1^Hx_2+\beta(\widehat{A}) x_1^Hx_1=\overline{-\mu_* jx_2^Hx_1+\beta(\widehat{A}) x_2^Hx_2},
\]
which, by noting $\beta(\widehat{A})\neq0$, is equivalent to 
\begin{equation}\label{eq:xhatcondition1}
x_1^Hx_1=x_2^Hx_2.
\end{equation} 
Hence the 2D-eigenvector $x_*$ must satisfy 
the relations \eqref{eq:xhatcondition0} and \eqref{eq:xhatcondition1}.

\begin{algorithm}[htbp] %[H]
	\caption{DTI by 2DRQI} \label{alg:dtiby2DRQI}
	\begin{algorithmic}[1]
		\REQUIRE{$m\times m$ stable matrix $\widehat{A}$, 
			${\tt reltol}$, ${\tt tol}$.}
		
		\ENSURE{2D-eigentriplet $(\widehat{\mu},\widehat{\lambda},\widehat{x})$, 
			where $\widehat{\lambda}$ is an estimate of the DTI $\beta(\widehat{A})$,
			and a backward error estimate $\eta_2$.}
		
		\STATE\label{step:DTI1} 
		set $\mu_0$ as the imaginary part of  
		the rightmost eigenvalue of $\widehat{A}$.
		
		\STATE\label{step:DTI2} 
		compute the singular triplet $(u,\lambda_0,v)$ 
		corresponding to the smallest singular value of 
		$\widehat{A}-\mu_0{\tt i}I$.
		
		\STATE\label{step:DTI3} 
		apply the 2DRQI (Algorithm~\ref{alg:2dRQIps})  
		with initial $(\mu_0,\lambda_0, 
		x_0 = \frac{1}{\sqrt{2}}\left[\begin{smallmatrix}
			u\\v \end{smallmatrix}\right])$ 
		and stopping tolerance {\tt tol} to 
		compute an approximate 2D-eigentriplet
		$(\widehat{\mu},\widehat{\lambda},\widehat{x})$
		of $(A, C)$ and the corresponding backward error estimate
		$\eta_2$.    
		
		\STATE validate the computed DTI $\widehat{\lambda}$ 
		with {\tt reltol} (optional).
		
	\end{algorithmic}
\end{algorithm}

Algorithm~\ref{alg:dtiby2DRQI} is an outline of
a 2DRQI-based algorithms for computing $\beta(\widehat{A})$.  
Two remarks are in order. 
\begin{enumerate} 
\item The initial $(\mu_0, \lambda_0, x_0)$ 
(lines 1 and 2) follows the recommendation in \cite{Freitag2011Aaa}, 
and is critical for the success of the computation. 

\item To satisfy the conditions \eqref{eq:xhatcondition0} 
for the approximate 2D-eigenvector $x_k = \left[\begin{smallmatrix}
	x_{k,1}\\
	x_{k,2}
\end{smallmatrix}\right]$, we should add the following steps 
after line~\ref{step:normalization} in the 2DRQI (Algorithm~\ref{alg:2dRQIps}):
\begin{algorithmic}[1]
	\STATE $x_{k+1,1} = \frac{\sqrt{2}}{2}x_{k+1,1}/\|x_{k+1,1}\|$.
	\STATE $x_{k+1,2} = \frac{\sqrt{2}}{2}x_{k+1,2}/\|x_{k+1,2}\|$.
\end{algorithmic}
\end{enumerate} 

For the stopping criterion of the 2DRQI,  
we use a backward error estimate of the computed DTI.
It has been a challenge to properly define the stopping criterion
of iterative methods for computing DTI 
\cite{Freitag2011Aaa,dinverse1999,Kangal2018,ahownear1985}. 
A main reason is that it is meaningless to
define the backward error for a estimated DTI $\widehat{\beta}$ {\em only}. 
Specifically, if a backward error $\widetilde{\eta}$ of $\widehat{\beta}$ 
is defined as
\begin{equation}\label{eq:naiveDefOfDTI}
\widetilde{\eta} =
\inf \left\{\epsilon \mid \exists\,\delta \widehat{A} \,\mbox{such that}\,
\|\delta \widehat{A}\|\leq\epsilon\|\widehat{A}\| \, \mbox{and}\,
\beta(\widehat{A}+\delta \widehat{A})=\widehat{\beta} \right\},
\end{equation} 
then the following proposition shows that the calculation of 
the backward error $\widetilde{\eta}$ is as hard as the calculation 
of the original $\beta(\widehat{A})$. 

\begin{proposition}\label{prop:betaAhatalone}
If $\beta(\widehat{A})>\widehat{\beta}$, then 
$\widetilde{\eta}=\frac{\beta(\widehat{A})-\widehat{\beta}}{\|\widehat{A}\|}$.
\end{proposition}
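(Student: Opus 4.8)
The plan is to prove the claimed equality by establishing two inequalities, using the fact that $\beta$ is a $1$-Lipschitz function of the matrix in the spectral norm. First I would observe that $\beta(\cdot)$ is Lipschitz with constant $1$: for any $\widehat{A}$ and $\delta\widehat{A}$ one has $|\beta(\widehat{A}+\delta\widehat{A})-\beta(\widehat{A})|\le\|\delta\widehat{A}\|$, since a perturbation that destabilizes $\widehat{A}+\delta\widehat{A}$ combined with $\delta\widehat{A}$ destabilizes $\widehat{A}$, and vice versa. This is the only structural property of the distance to instability I expect to need.

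For the lower bound $\widetilde{\eta}\ge\frac{\beta(\widehat{A})-\widehat{\beta}}{\|\widehat{A}\|}$: take any admissible $\delta\widehat{A}$ in the infimum defining $\widetilde{\eta}$, so $\beta(\widehat{A}+\delta\widehat{A})=\widehat{\beta}$ and $\|\delta\widehat{A}\|\le\epsilon\|\widehat{A}\|$. By the Lipschitz bound, $\beta(\widehat{A})-\widehat{\beta}=\beta(\widehat{A})-\beta(\widehat{A}+\delta\widehat{A})\le\|\delta\widehat{A}\|\le\epsilon\|\widehat{A}\|$, so $\epsilon\ge\frac{\beta(\widehat{A})-\widehat{\beta}}{\|\widehat{A}\|}$ (here we use $\beta(\widehat{A})>\widehat{\beta}$ so the right side is positive); taking the infimum over admissible $\epsilon$ gives the lower bound.

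For the matching upper bound I would exhibit a perturbation attaining (in the limit) the ratio $\frac{\beta(\widehat{A})-\widehat{\beta}}{\|\widehat{A}\|}$. The natural candidate is a scaled copy of $\widehat{A}$ itself: set $\delta\widehat{A}=-t\widehat{A}$ for $t\in[0,1]$, so that $\widehat{A}+\delta\widehat{A}=(1-t)\widehat{A}$ and, by homogeneity of $\beta$, $\beta((1-t)\widehat{A})=(1-t)\beta(\widehat{A})$. Choosing $t$ so that $(1-t)\beta(\widehat{A})=\widehat{\beta}$, i.e. $t=\frac{\beta(\widehat{A})-\widehat{\beta}}{\beta(\widehat{A})}$, gives an admissible perturbation with $\|\delta\widehat{A}\|=t\|\widehat{A}\|=\frac{(\beta(\widehat{A})-\widehat{\beta})\|\widehat{A}\|}{\beta(\widehat{A})}$, hence $\widetilde{\eta}\le\frac{\beta(\widehat{A})-\widehat{\beta}}{\beta(\widehat{A})}$. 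This is the wrong normalization, so the scaling trick alone is not enough; the real work is to show the infimum is actually $\frac{\beta(\widehat{A})-\widehat{\beta}}{\|\widehat{A}\|}$, which requires a sharper destabilizing perturbation of norm close to $\beta(\widehat{A})-\widehat{\beta}$ (not $\frac{(\beta(\widehat{A})-\widehat{\beta})\|\widehat{A}\|}{\beta(\widehat{A})}$). To build it, let $E_0$ be a minimal-norm destabilizing perturbation of $\widehat{A}$, so $\|E_0\|=\beta(\widehat{A})$ and $\widehat{A}+E_0$ has an eigenvalue on the imaginary axis. For $s\in[0,1]$ consider $\delta\widehat{A}=sE_0$: then $\widehat{A}+sE_0$ lies on the segment from $\widehat{A}$ (with $\beta=\beta(\widehat{A})$) to $\widehat{A}+E_0$ (with $\beta=0$), and by continuity and the Lipschitz property $s\mapsto\beta(\widehat{A}+sE_0)$ decreases continuously from $\beta(\widehat{A})$ to $0$ with $|\,\beta(\widehat{A}+sE_0)-\beta(\widehat{A})\,|\le s\|E_0\|$; moreover along this segment $\beta(\widehat{A}+sE_0)\ge\beta(\widehat{A})-s\|E_0\|=\beta(\widehat{A})-s\beta(\widehat{A})$ is not directly what we want either. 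The key observation that makes it work is the reverse Lipschitz estimate in this direction: $\beta(\widehat{A}+sE_0)\le\beta(\widehat{A})-s\|E_0\|$ is false in general, but $\beta(\widehat{A}) - \beta(\widehat{A}+sE_0) \le s\|E_0\| = s\beta(\widehat{A})$ combined with the intermediate value theorem guarantees some $s^\ast$ with $\beta(\widehat{A}+s^\ast E_0)=\widehat{\beta}$ and $s^\ast\le\frac{\beta(\widehat{A})-\widehat{\beta}}{\beta(\widehat{A})}\cdot\frac{\beta(\widehat{A})}{\|E_0\|}$; since $\|E_0\|=\beta(\widehat{A})\le\|\widehat{A}\|$, this still only yields $\|\delta\widehat{A}\|=s^\ast\|E_0\|\le\beta(\widehat{A})-\widehat{\beta}$, and dividing by $\|\widehat{A}\|$ gives exactly $\widetilde{\eta}\le\frac{\beta(\widehat{A})-\widehat{\beta}}{\|\widehat{A}\|}$. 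I expect the main obstacle to be making this last step rigorous — specifically, verifying that one can decrease $\beta$ from $\beta(\widehat{A})$ down to $\widehat{\beta}$ using a perturbation of norm at most $\beta(\widehat{A})-\widehat{\beta}$ (rather than a larger multiple), which is precisely the Lipschitz-$1$ property applied along the ray toward a nearest unstable matrix; once that is clean, combining it with the lower bound closes the proof.
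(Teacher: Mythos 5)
Your lower-bound argument is sound and is essentially the paper's: the paper phrases it via the triangle inequality (for any admissible $\delta\widehat{A}$, take $E_{\widehat{\beta}}$ destabilizing $\widehat{A}+\delta\widehat{A}$ with $\|E_{\widehat{\beta}}\|=\widehat{\beta}$, so $\beta(\widehat{A})\leq\|\delta\widehat{A}\|+\widehat{\beta}$), which is exactly the $1$-Lipschitz property in the direction you need.

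The upper bound is where you have a genuine gap, and you half-noticed it yourself. The forward Lipschitz estimate $\beta(\widehat{A})-\beta(\widehat{A}+sE_0)\leq s\|E_0\|$ evaluated at the crossing point $s^\ast$ gives $\beta(\widehat{A})-\widehat{\beta}\leq s^\ast\|E_0\|$, i.e.\ $s^\ast\geq(\beta(\widehat{A})-\widehat{\beta})/\|E_0\|$ --- a \emph{lower} bound on $s^\ast$, not the upper bound you wrote. The inequality $s^\ast\leq(\beta(\widehat{A})-\widehat{\beta})/\|E_0\|$ that you need cannot come from the generic Lipschitz property; it requires the reverse estimate $\beta(\widehat{A}+sE_0)\leq(1-s)\|E_0\|$, which is false along a general direction but true along the segment toward a nearest unstable matrix for a concrete reason you did not supply: the leftover perturbation $(1-s)E_0$ explicitly destabilizes $\widehat{A}+sE_0$ (since $\widehat{A}+sE_0+(1-s)E_0=\widehat{A}+E_0$ is unstable), so $\beta(\widehat{A}+sE_0)\leq\|(1-s)E_0\|=(1-s)\beta(\widehat{A})$. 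Combined with your forward bound, this forces $\beta(\widehat{A}+sE_0)=(1-s)\beta(\widehat{A})$ exactly, hence $s^\ast=(\beta(\widehat{A})-\widehat{\beta})/\beta(\widehat{A})$ and $\|s^\ast E_0\|=\beta(\widehat{A})-\widehat{\beta}$. This is precisely the paper's construction: it sets $F=\frac{\beta(\widehat{A})-\widehat{\beta}}{\beta(\widehat{A})}E_{\beta}$, shows $\beta(\widehat{A}+F)\leq\widehat{\beta}$ by exhibiting $E_{\beta}-F$ as a destabilizer of $\widehat{A}+F$, shows $\beta(\widehat{A}+F)\geq\widehat{\beta}$ by the triangle inequality, and concludes $\beta(\widehat{A}+F)=\widehat{\beta}$ with $\|F\|=\beta(\widehat{A})-\widehat{\beta}$. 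So: same ray, same scaling, but you need the explicit destabilizing perturbation of the intermediate matrix to close the argument --- IVT plus Lipschitz in the wrong direction does not do it.
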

\begin{proof}
We first prove the inequality 
$\widetilde{\eta}\geq \frac{\beta(\widehat{A})-\widehat{\beta}}{\|\widehat{A}\|}$.
By the definition of $\widetilde{\eta}$, for any $t>0$, there exists 
a matrix $\delta\widehat{A}$ such that 
$\beta(\widehat{A}+\delta\widehat{A})=\widehat{\beta}$ and 
$\|\delta\widehat{A}\|\leq(\widetilde{\eta}+t)\|\widehat{A}\|$. 
By \eqref{eq:47}, there exists $E_{\widehat{\beta}}$ such that 
$\|E_{\widehat{\beta}}\|=\widehat{\beta}$ and 
$(\widehat{A}+\delta\widehat{A})+E_{\widehat{\beta}}$ is unstable. 
Thus $\widehat{A}$ is unstable
under the pertburation $\delta\widehat{A}+E_{\widehat{\beta}}$.
By the definition of $\beta(\widehat{A})$, this implies
\[
\|\delta\widehat{A}+E_{\widehat{\beta}}\|\geq\beta(\widehat{A}).
\]
Thus 
\[
\beta(\widehat{A})\leq \|\delta\widehat{A}+E_{\widehat{\beta}}\|\leq
\|\delta\widehat{A}\|+
\|E_{\widehat{\beta}}\|\leq(\widetilde{\eta}+t)\|\widehat{A}\|+\widehat{\beta}
\]
holds for any $t>0$.  
Let $t\rightarrow0$, then we have
\begin{equation} \label{eq:lowbd} 
\widetilde{\eta}\geq \frac{\beta(\widehat{A})-\widehat{\beta}}{\|\widehat{A}\|}.
\end{equation}

We next prove the inequality 
$\widetilde{\eta}\leq 
\frac{\beta(\widehat{A})-\widehat{\beta}}{\|\widehat{A}\|}$.
By the definition of $\beta(\widehat{A})$, there exists a matrix $E_{\beta}$ 
such that $\|E_{\beta}\|=\beta(\widehat{A})$ and $A+E_{\beta}$ is unstable. 
Let $F = \frac{\beta(\widehat{A})-\widehat{\beta}}{\beta(\widehat{A})}E_{\beta}$. 
Then $\|F\| = \beta(\widehat{A})-\widehat{\beta}<\beta(\widehat{A})$ and 
thus $\widehat{A}+F$ must be stable.
		
Consider $\beta(\widehat{A} + F)$. Since 
$\widehat{A}+F+(E_{\beta}-F) = \widehat{A}+E_{\beta}$ is unstable, we have 
\begin{equation}\label{eq:betaoneside}
\beta(\widehat{A}+F)\leq \|E_{\beta}-F\|=\widehat{\beta}.
\end{equation}
On the other hand, assume there is a matrix $G$ satisfies 
$\widehat{A}+F+G$ is unstable, then by the definition of $\beta(\widehat{A})$,
\[
\beta(\widehat{A})\leq \|F+G\|\leq \beta(\widehat{A})-\widehat{\beta}+\|G\|.
\]
Thus $\|G\|\geq \widehat{\beta}$, which implies
\begin{equation}\label{eq:betaotherside}
\beta(\widehat{A}+F)\geq \widehat{\beta}.
\end{equation}
By \eqref{eq:betaoneside} and \eqref{eq:betaotherside}, we have
\begin{equation}\label{eq:betaAF}
\beta(\widehat{A}+F)=\widehat{\beta}.
\end{equation}
By \eqref{eq:betaAF} and the definition of $\widetilde{\eta}$,
\[
\|\widehat{A}\|\widetilde{\eta} \leq \|F\| = \beta(\widehat{A})-\widehat{\beta}.
 \]
Then we have the inequality
\begin{equation} \label{eq:uppbd} 
\widetilde{\eta} \leq\frac{\beta(\widehat{A})-\widehat{\beta}}{\|\widehat{A}\|}.
\end{equation}
The proposition is then proven by \eqref{eq:lowbd} and \eqref{eq:uppbd}.
\end{proof}

Proposition~\ref{prop:betaAhatalone} implies the exact calculation of 
the backward error 
$\widetilde{\eta}$ could be as hard as the calculation of the original 
$\beta(\widehat{A})$. 
This is analogous to the fact that for eigenvalue problems we do not define the backward error of an approximate eigenvalue only. We consider the backward error of an approximate 
eigenpairs, see e.g.~\cite[Thm.1.3]{Stewart2001Matrix2}.
As an advantage of treating the DTI via the 2DEVP,  
we can establish the notion of the 
backward error for a computed DTI via an approximate
2D-eigentriplet. The resulting backward error estimation 
naturally leads to a reliable stopping criterion 
for an iterative DTI algorithm.

		To that end, let the approximate
		2D eigentriplet $(\widehat{\mu},\widehat{\lambda},\widehat{x})$ 
		of $(A, C)$ be an exact 2D-eigentriplet of 
		structurely-perturbed 2DEVP 
		\begin{subnumcases}{\label{eq:2devp-dti-p}}
			\left[\begin{smallmatrix}
				0&\widehat{A}+\delta\widehat{A}\\
				\widehat{A}^H+\delta\widehat{A}^H&0
			\end{smallmatrix}\right]
			\widehat{x} -\widehat{\mu}C\widehat{x}
			=\widehat{\lambda}\widehat{x}, \\
			\hspace{8.5em}\widehat{x}^HC\widehat{x}  =0, \\
			\hspace{9.3em}\widehat{x}^H\widehat{x}  =1.
		\end{subnumcases}
		%\begin{subequations}\label{eq:2devp-dti-p}
		%	\begin{empheq}[left={}\empheqlbrace]{alignat=2}
			%		\left[\begin{smallmatrix}
				%			0&\widehat{A}+\delta\widehat{A}\\
				%			\widehat{A}^H+\delta\widehat{A}^H&0
				%		\end{smallmatrix}\right]
			%		\widehat{x} -\widehat{\mu}C\widehat{x}
			%		& =\widehat{\lambda}\widehat{x}, \\
			%		\widehat{x}^HC\widehat{x} & =0, \\
			%		\widehat{x}^H\widehat{x} & =1.
			%	\end{empheq}
		%\end{subequations}
		for some $\delta \widehat{A}$.  Then we can define a 
		{structure-preserving backward error} of the 2DEVP 
		of the DTI problem as follows:
		\begin{equation}\label{backerr_struct}
			\widehat{\eta}_{{\beta}}(\widehat{\mu},\widehat{\lambda},\widehat{x}) 
			= \inf \left\{ \epsilon \mid \exists\,\delta \widehat A
			\, \mbox{such that}\,  \|\delta \widehat{A}\|\leq\epsilon\|\widehat{A}\|
			\, \mbox{and} \,
			\eqref{eq:2devp-dti-p}\, \mbox{holds}   
			\right\}.  \end{equation}
		We first note that the set in \eqref{backerr_struct} is nonempty
		when the approximate 2D-eigenvector 
		$\widehat{x}=\left[\begin{smallmatrix}
			\widehat{x}_1\\ \widehat{x}_2
		\end{smallmatrix}\right]$ satisfies 
		the conditions \eqref{eq:xhatcondition0}.
		In fact, denote
		${r}=\left[\begin{smallmatrix} {r}_1\\ {r}_2 \end{smallmatrix}\right]$,
		where $r_1 = \widehat{A}\widehat{x}_2-\widehat{\mu} {\tt i}\widehat{x}_2
		-\widehat{\lambda} \widehat{x}_1$
		and $r_2 = \widehat{A}^H\widehat{x}_1+\widehat{\mu} {\tt i}\widehat{x}_1
		-\widehat{\lambda} \widehat{x}_2$. 
		Then it can be shown that the matrix
		\[
		\delta \widehat{A} = 
		\delta \widehat{A}_1 + 
		\delta \widehat{A}_2
		\quad \mbox{with} \quad 
		\delta \widehat{A}_1 = 
		-\left(I-\frac{\widehat{x}_1\widehat{x}_1^H}{\widehat{x}_1^H\widehat{x}_1}
		\right) \frac{r_1 \widehat{x}_2^H}{\widehat{x}_2^H\widehat{x}_2} 
		\quad \mbox{and} \quad
		\delta \widehat{A}_2 = 
		- \frac{\widehat{x}_1 r^H_2}{\widehat{x}_1^H\widehat{x}_1}.
		\]
		is in the set \eqref{backerr_struct}. 
		Meanwhile, we have
		\begin{align}
			\|\delta\widehat{A}\|
			&=\max\limits_{\|z\|=1}
			\left\Vert (\delta \widehat{A}_1  + \delta \widehat{A}_2) z \right\Vert
			=\max\limits_{\|z\|=1}\sqrt{
				\left\Vert  
				\delta \widehat{A}_1 z \right\Vert^2 + 
				\left\Vert  
				\delta \widehat{A}_2 z \right\Vert^2} \nonumber \\ 
			&\leq \sqrt{
				\left\Vert \delta \widehat{A}_1 \right\Vert^2+
				\left\Vert \delta \widehat{A}_2 \right\Vert^2
			} 
			\leq \sqrt{2\|r_1\|^2+2\|r_2\|^2} = \sqrt{2}\|r\|,  \label{eq:deltaAbd}
		\end{align}
		where the second equality lies in the fact that 
		$\delta \widehat{A}_1 z$ is orthogonal to $\delta \widehat{A}_2 z$.
		
		Next we provide an estimate of $\widehat{\eta}_{{\beta}}$.
		Since $\widehat{\eta}_{{\beta}}$ 
		is the backward error of the stuctured 2DEVP \eqref{eq:2devp-dti-p}, 
		the backward error $\eta$ in \eqref{def:backerr} 
		of a generic (unstructured) 2DEVP is 
		{the lower bound of $\widehat{\eta}_{{\beta}}$}:
		\begin{equation}\label{eq:etarelation1}
			\widehat{\eta}_{{\beta}} \geq \eta \geq \eta_1.  
		\end{equation}
		where $\eta_1$ is defined in \eqref{eq:backerr1}.
		On the other hand, by the definition of $\widehat{\eta}_{{\beta}}$
		and \eqref{eq:deltaAbd} , 
		we have {an upper bound of $\widehat{\eta}_{{\beta}}$:}  
		\begin{equation} \label{eq:eta2def}
			\widehat{\eta}_{{\beta}}\leq 
			\eta_2 \equiv \sqrt{2}\frac{\|r\|}{\|\widehat{A}\|}.
		\end{equation} 
		By the facts that $\|\widehat{A}\|=\|A\|$ and $\|C\|=1$, we have
		\begin{equation}\label{eq:eta2reta1}
			\frac{\eta_2}{\eta_1}\leq\frac{\sqrt{2}\frac{\|r\|}{\|\widehat{A}\|}}{\frac{\|r\|}{\|A\|+|\widehat{\mu}|\|C\|}}=\sqrt{2}\left(1+\frac{|\widehat{\mu}|}{\|\widehat{A}\|}\right).
		\end{equation} 
		Combining \eqref{eq:etarelation1}, \eqref{eq:eta2def}, 
		and \eqref{eq:eta2reta1}, we have
		\begin{equation}\label{eq:etarelation3}
			\frac{1}{\sqrt{2}\left(1+\frac{|\widehat{\mu}|}{\|\widehat{A}\|}\right)}
			\eta_2\leq \widehat{\eta}_{{\beta}}\leq\eta_2. 
		\end{equation}
		Therefore $\eta_2$ defined in \eqref{eq:eta2def} can be used as 
		an estimate of $\widehat{\eta}_{{\beta}}$. 
		Consequently, the stopping critera (line~\ref{step:stop}) of the 2DRQI (Algorithm~\ref{alg:2dRQIps}) should be
		\begin{equation}\label{eq:criterion1}
			|\imag(x_{k,1}^Hx_{k,2})|\leq {\tt tol}  
			\quad \mbox{and} \quad 
			\eta_2(\mu_k,\lambda_k,x_k)\leq {\tt tol},
		\end{equation} 
		where {\tt tol} is a prescribed tolerance value.
		In addition, to handle the possible stagnation of the 2DRQI,
		we can also include the following test for possible stagnation:
		\begin{equation} \label{eq:2drqi_stop_ad}
			\eta_2(\mu_k,\lambda_k,x_k)\geq
			\frac{1}{2}\Big(\eta_2(\mu_{k-2},\lambda_{k-2},x_{k-2}) +
			\eta_2(\mu_{k-1},\lambda_{k-1},x_{k-1})\Big).
		\end{equation}
		
For the optional validation step of Algorithm~\ref{alg:dtiby2DRQI}, we know that if the computed $\widehat{\lambda}$ is an acceptable estimate of DTI $\beta(\widehat{A})$, it should satisfy
		\begin{equation} \label{eq:dtiverify} 
			(1-{\tt reltol})\widehat{\lambda} 
			\leq \beta(\widehat{A})\leq \widehat{\lambda}
		\end{equation} 
		for a small {\tt reltol}, where without loss of generality, 
		we assume $\widehat{\lambda} > 0$. Otherwise, according to the symmetric properties of 2D eigenvalues in DTI, we can use $-\widehat{\lambda}$ as an estimate of the DTI $\beta(\widehat{A})$. 
		
		The upper bound of \eqref{eq:dtiverify} naturally holds according to 
		\eqref{eq:beta2D} and $(\widehat{\mu},\widehat{\lambda})$ 
		is a 2D-eigenvalue. For the lower bound of \eqref{eq:dtiverify}, 
		we just need to verify that $H((1-{\tt reltol})\widehat{\lambda})$ 
		has no imaginary eigenvalue. This is based on the following lemma. 
		\begin{lemma}[\cite{BBK1989}] \label{lem:dtitest} 
			For any $\lambda>0$, $\lambda<\beta(\widehat{A})$
			if and only if $G(\lambda)$ has no pure imaginary eigenvalue, 
			where $G(\lambda)$ is an Hamiltonian matrix of the form
			\begin{equation}
				G(\lambda) = \begin{bmatrix}
					\widehat{A}&-\lambda I\\
					\lambda I &-\widehat{A}^H
				\end{bmatrix}.
			\end{equation}
		\end{lemma}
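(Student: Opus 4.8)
The plan is to reduce Lemma~\ref{lem:dtitest} to the classical singular-value characterization of the distance to instability and then to an algebraic identity linking $\sigma_{\min}(\widehat{A}-{\tt i}\omega I)$ to the spectrum of the Hamiltonian matrix $G(\lambda)$. Since the result is due to \cite{BBK1989}, one option is simply to cite it; the sketch below records a self-contained argument.

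First I would use the reformulation \eqref{eq:47} together with the block structure \eqref{eq:defA}: the eigenvalues of $\left[\begin{smallmatrix}0&M\\M^H&0\end{smallmatrix}\right]$ are $\pm$ the singular values of $M$, so with $M=\widehat{A}-\mu{\tt i}I$ the smallest positive eigenvalue of $A-\mu C$ equals $\sigma_{\min}(\widehat{A}-\mu{\tt i}I)$ and hence $\beta(\widehat{A})=\min_{\omega\in\mathbb{R}}\sigma_{\min}(\widehat{A}-{\tt i}\omega I)$. The map $\omega\mapsto\sigma_{\min}(\widehat{A}-{\tt i}\omega I)$ is continuous and satisfies $\sigma_{\min}(\widehat{A}-{\tt i}\omega I)\geq|\omega|-\|\widehat{A}\|\to\infty$ as $|\omega|\to\infty$, so the minimum is attained at some $\omega_*$. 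By the intermediate value theorem applied on $[\omega_*,\infty)$, for $\lambda>0$ there is a real $\omega$ with $\sigma_{\min}(\widehat{A}-{\tt i}\omega I)=\lambda$ whenever $\lambda\geq\beta(\widehat{A})$, while conversely any singular value of any $\widehat{A}-{\tt i}\omega I$ is at least $\beta(\widehat{A})$. This yields the intermediate equivalence: for $\lambda>0$, $\lambda<\beta(\widehat{A})$ if and only if $\lambda$ is not a singular value of $\widehat{A}-{\tt i}\omega I$ for any $\omega\in\mathbb{R}$.

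The \emph{crux} is then the claim that, for $\omega\in\mathbb{R}$, $G(\lambda)$ has the eigenvalue ${\tt i}\omega$ if and only if $\lambda$ is a singular value of $\widehat{A}-{\tt i}\omega I$. For one direction, given unit vectors with $(\widehat{A}-{\tt i}\omega I)v=\lambda u$ and $(\widehat{A}-{\tt i}\omega I)^Hu=\lambda v$, a one-line computation gives $G(\lambda)\left[\begin{smallmatrix}v\\u\end{smallmatrix}\right]={\tt i}\omega\left[\begin{smallmatrix}v\\u\end{smallmatrix}\right]$. For the converse, an eigenvector $\left[\begin{smallmatrix}p\\q\end{smallmatrix}\right]\neq0$ of $G(\lambda)$ for ${\tt i}\omega$ yields $(\widehat{A}-{\tt i}\omega I)p=\lambda q$ and $(\widehat{A}-{\tt i}\omega I)^Hq=\lambda p$, hence $(\widehat{A}-{\tt i}\omega I)^H(\widehat{A}-{\tt i}\omega I)p=\lambda^2p$; since $\lambda>0$ forces $p\neq0$, $\lambda^2$ is an eigenvalue of $(\widehat{A}-{\tt i}\omega I)^H(\widehat{A}-{\tt i}\omega I)$, i.e. $\lambda$ is a singular value of $\widehat{A}-{\tt i}\omega I$. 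Chaining this with the equivalence from the previous paragraph proves the lemma.

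I expect the only real care needed to be bookkeeping: pinning down the sign and conjugation conventions so that $G(\lambda)$ exactly as written (rather than a permuted or transposed variant) is the matrix that appears, and checking that the boundary case $\lambda=\beta(\widehat{A})$ sits on the correct side of the strict inequality, i.e. that it must produce a pure imaginary eigenvalue, which the attained-minimum argument guarantees. The algebraic equivalence itself is immediate once the conventions are fixed.
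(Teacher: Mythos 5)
The paper states this lemma by citation to \cite{BBK1989} and gives no proof of its own, so there is no in-paper argument to compare against; you are simply reconstructing the standard result of Byers and of Boyd--Balakrishnan--Kabamba. Your reconstruction is correct: the reduction of $\lambda<\beta(\widehat{A})$ to ``$\lambda$ is not a singular value of $\widehat{A}-{\tt i}\omega I$ for any real $\omega$'' via the attained minimum plus the intermediate value theorem, and the algebraic equivalence between $\lambda$ being a singular value of $\widehat{A}-{\tt i}\omega I$ and ${\tt i}\omega$ being an eigenvalue of $G(\lambda)$ (forward direction from the SVD pair $(\widehat{A}-{\tt i}\omega I)v=\lambda u$, $(\widehat{A}-{\tt i}\omega I)^Hu=\lambda v$; converse by forming $(\widehat{A}-{\tt i}\omega I)^H(\widehat{A}-{\tt i}\omega I)p=\lambda^2 p$ and noting $p\neq 0$ since $\lambda>0$), both check out with the sign conventions as written. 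This is exactly the argument in the cited reference, so no deviation to report.
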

		
		This validiation procedure is the one proposed in 
		\cite{Freitag2011Aaa}. %\footnote{Note: 
			%... Spencer .. use the absolute error
			%$\widehat{\lambda} -{\tt reltol}) <\beta(\widehat{A})$.}
		However, it should be noted that checking whether 
		$G((1-{\tt reltol})\widehat{\lambda})$ has no imaginary eigenvalue 
		could be prohibitively expensive for large scale problems.
		Therefore, the validation step is optional in 
		all existing algorithms 
		for computing DTI \cite{Freitag2011Aaa,dinverse1999,Kangal2018}.
		In Section\ref{sec:Experiments}, we will provide a numerical 
		example to show that the 2DRQI outperforms 
		a recently proposed subspace method for the DTI computation.

%\newpage
\section{Numerical examples}\label{sec:Experiments}
In this section, we first present a numerical example 
to illustrate the convergence behavior of the 2DRQI 
(Algorithm \ref{alg:2dRQIps}), and then present
two examples for finding the minmax of two 
Rayleigh quotients (Algorithm~\ref{alg:minmax2RQs}) 
and for computing the DTI (Algorithm~\ref{alg:dtiby2DRQI}).
All algorithms are implemented in MATLAB. 
Numerical experiments are performed on a HP computer
with an Intel(R) Core(TM) 2.60GHz i7-6700HQ CPU and 8GB RAM. 

	\begin{example} \label{eg:synth} 
	{\rm 
		This example illusrates convergence behaviors 
		of the 2DRQI (Algorithm~\ref{alg:2dRQIps}).
		Let us consider the 2DEVP \eqref{2deig} of the matrices
		\[
		A = \begin{bmatrix}
			-0.7&0.01&0.2\\
			0.01&2&0\\
			0.2&0&0 
		\end{bmatrix} 
		\quad \mbox{and} \quad 
		C = \begin{bmatrix}
			0.3&0.01&0.2\\
			0.01&1&0\\
			0.2&0&-1
		\end{bmatrix}.
		\] 
		It can be verified that 
		$(\mu_1,\lambda_1,x_1) = (1,1,\left[\begin{smallmatrix} 
			0\\ \frac{1}{\sqrt{2}} \\ \frac{1}{\sqrt{2}}
		\end{smallmatrix}\right])$ is a 2D-eigentriplet and
		$\lambda_1=1$ is an eigenvalue of $A-\mu_1 C$ with multiplicity 2. 
		In addition, by a brute-force bisection search 
		following the sorted eigencurves 
		$\lambda_1(\mu) \geq \lambda_2(\mu) \geq \lambda_3(\mu)$
		of $A-\mu C$ on the interval $[-1.5,1.5]$, we found additional 
		two 2D-eigenvalues to the machine precision: 
		\begin{align*} 
			(\mu_2,\lambda_2) 
			& = (-0.665101440190437, -0.239801782612878) \\
			(\mu_3,\lambda_3) 
			& = ( -0.145810069397438, -0.744080780565709). 
		\end{align*} 
		Moreover, $\lambda_2$ and $\lambda_3$ are
		the simple eigenvalue of 
		$A-\mu_2 C$ and $A-\mu_3 C$, respectively. 
		The left plot of Figure~\ref{fig:eg1a} are 
		the sorted eigencurves $\lambda_j(\mu)$ for $j = 1, 2, 3$. The maximum 2D-eigenvalue $(\mu_1,\lambda_1) = (1,1)$ is marked in red. The 2D-eigenvalue $(\mu_2,\lambda_2)$ is blue. The minimum 2D-eigenvalue $(\mu_3,\lambda_3)$ is green.
		
		\begin{figure}[tbhp]  %[H]
			\centering
			\includegraphics[scale=0.5]{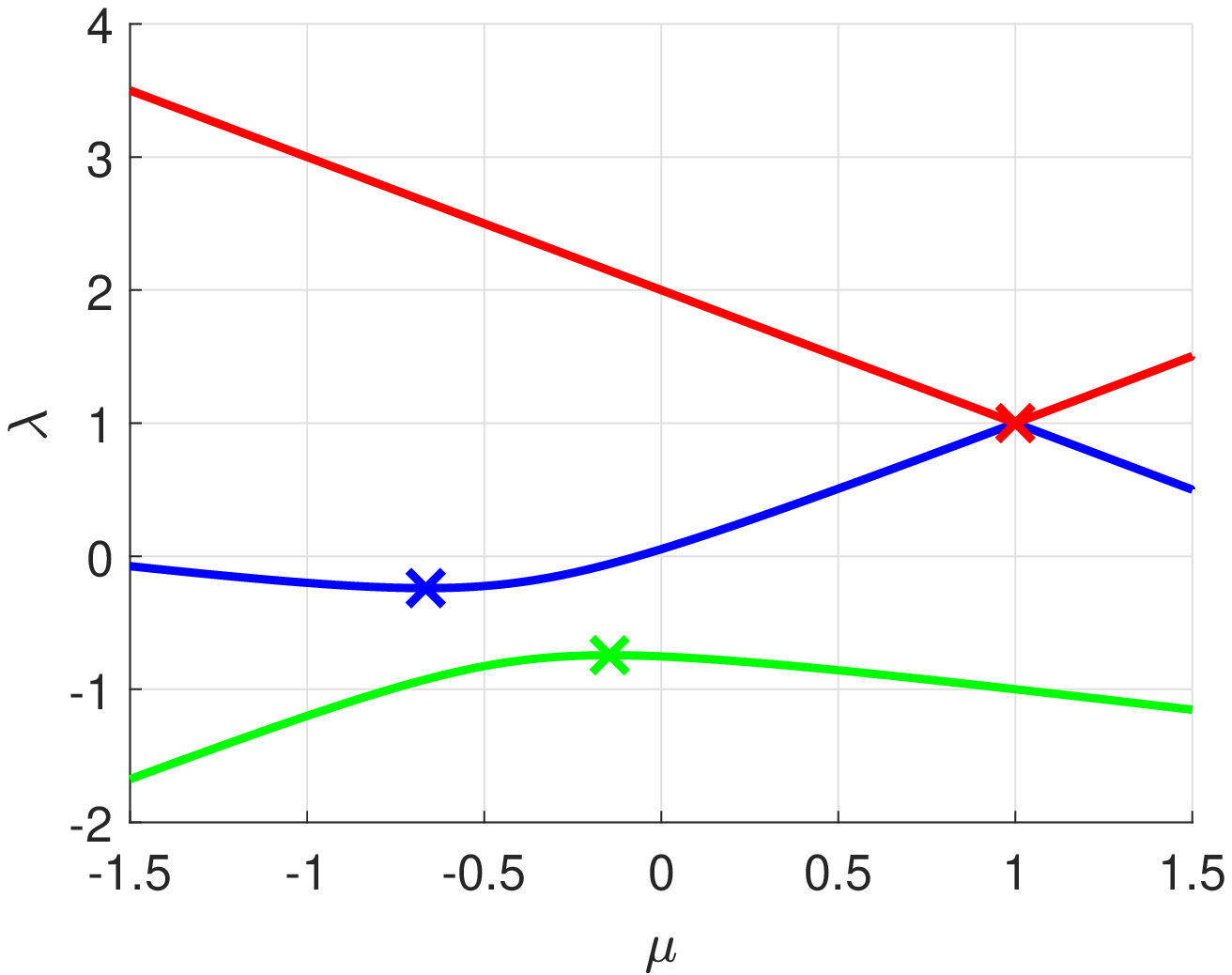}
			\includegraphics[scale=0.5]{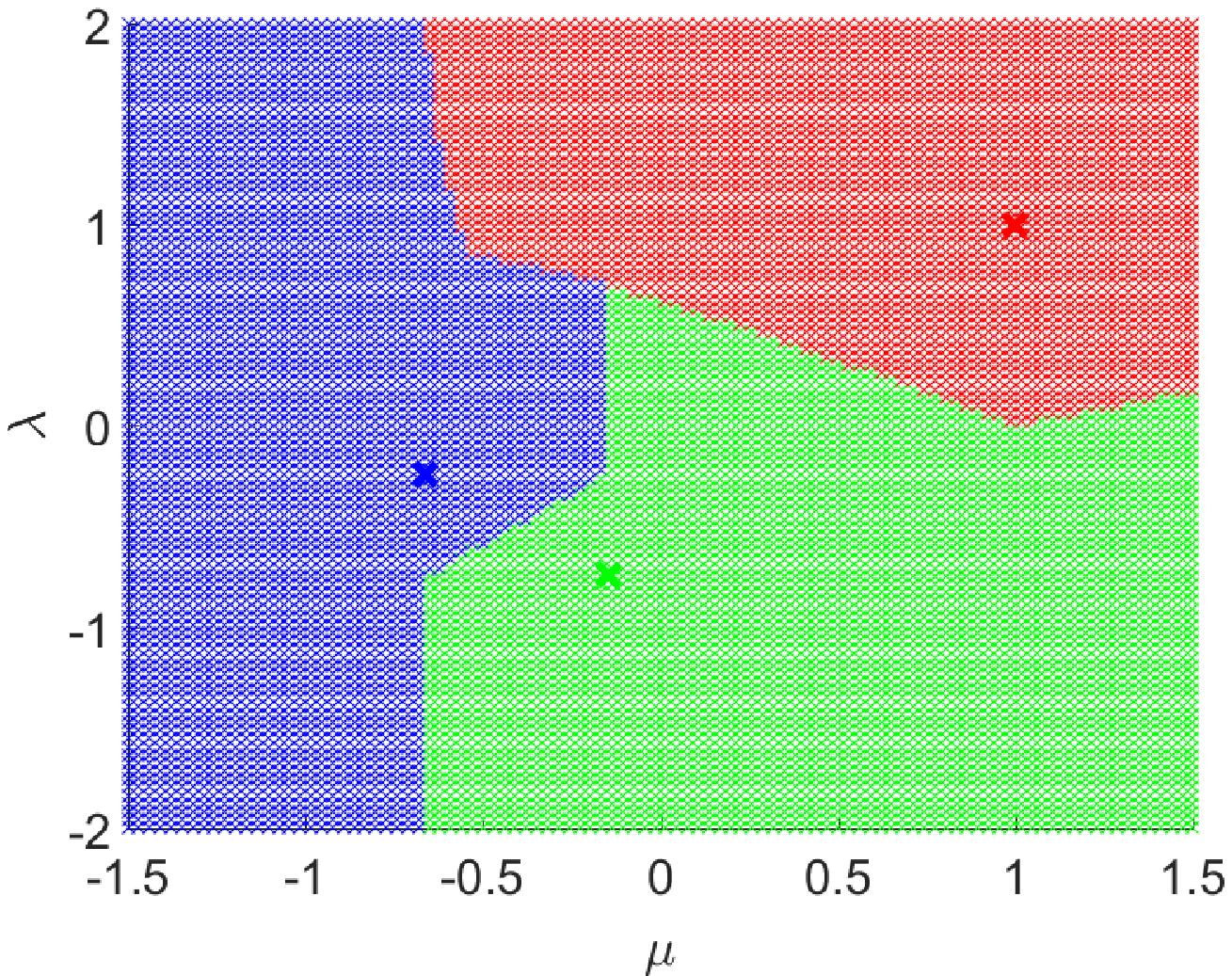}
			\caption{Left: sorted eigencurves and corresponding 2D-eigenvalues
				of $(A, C)$ in Example~\ref{eg:synth}.  
				Right: Computed 2D-eigenvalues with different initials.} \label{fig:eg1a}
		\end{figure}
		
		We use each grid point on the $100\times 100$ mesh of
		the domain $(\mu,\lambda) = [-1.5,1.5]\times[-2,2]$ as 
		an initial $(\mu_0,\lambda_0)$ and
		the vector $x_0$ is generated based on 
		the recommendation of Algorithm~\ref{alg:2dRQIps}. 
		If the 2DRQI with the initial $(\mu_0,\lambda_0,x_0)$ 
		and ${\tt tol} = n\cdot{\tt macheps}$ and maxit = 15, 
		converges to the $i$-th 2D-eigenvalue $(\mu_{i}, \lambda_{i})$, 
		then we use the same color for the initial 
		$(\mu_0, \lambda_0)$ and $(\mu_{i}, \lambda_{i})$.
		The right plot of Figure~\ref{fig:eg1a} shows 
		that the 2DRQI converges to a 2D-eigentriplet for 
		all 10,000 initials $(\mu_0,\lambda_0,x_0)$. 
		
		Table~\ref{table:error} records the convergence history of a sequence 
		$\{(\mu_{3;k}, \lambda_{3;k}, x_{3;k})\}$ 
		to the minimum 2D-eigenvalue $(\mu_3,\lambda_3)$, 
		marked in green in Figure~\ref{fig:eg1a}. 
		We observe that the sequence $\{(\mu_{3;k}, \lambda_{3;k})\}$
		converges quadratically, the matix $C_k$ 
		of the 2DRQ $(A_k, C_k)$ remains to be indefinite and $a_{12,k} \neq 0$. 
		
		\begin{table}[H]
			\caption{Convergence history of $\{(\mu_{3;k},\lambda_{3;k},x_{3;k})\}$ to 
				$(\mu_3,\lambda_3,x_3)$} \label{table:error}
			\centering
			\fontsize{8}{10}\selectfont    
			\begin{tabular}{c|ccccc} \hline
				$k$ & $|\mu_{3;k}-\mu_3|$ & $|\lambda_{3;k}-\lambda_3|$
				& $\eta_1(\mu_{3;k},\lambda_{3;k},x_{3;k})$ 
				& $(c_{1,k},c_{2,k})$ & $|a_{12,k}|$\\ \hline
				0 &  1.6e0    &  8.9e-1    & 4.1e-1  &(-1.0e0,  3.3e-1)  & 2.9e-1   \\
				1 &  2.6e-3   &  8.4e-3    & 7.1e-2  &(-1.0e0,  3.3e-1)  & 2.9e-1   \\
				2 &  2.2e-5   &  1.2e-7    & 2.7e-4  &(-1.0e0,  3.3e-1)  & 2.9e-1   \\
				3 &  6.5e-13  &  1.1e-16   & 2.1e-9  &(-1.0e0,  3.3e-1)  & 2.9e-1   \\ 
				4 &  3.4e-16  &  2.6e-16   & 1.1e-16 &(-1.0e0,  3.3e-1)  & 2.9e-1 \\ \hline
			\end{tabular}\vspace{0cm}
		\end{table}
		
		Table~\ref{table:error2} shows the convergence history of a sequence 
		$\{(\mu_{1;k}, \lambda_{1;k}, x_{1;k})\}$ to the maximum 2D-eigenvalue 
		$(\mu_1,\lambda_1)$, marked in red in Figure~\ref{fig:eg1a}. 
		Note that $\lambda_1$ is an eigenvalue of $A-\mu_1C$ with multiplicity 2. 
		We observe that
		the sequence $\{\mu_{1;k}, \lambda_{1;k}\}$
		converges quadratically and the matix $C_k$
		of the 2DRQ $(A_k, C_k)$ remains to be indefinite. However,
		$a_{12,k}$ approaches to 0.
		
		\begin{table}[H]
			\caption{Convergence history for $\{(\mu_{1;k},\lambda_{1;k},x_{1;k})\}$ to 
				$(\mu^*_1,\lambda^*_1,x^*_1)$.} \label{table:error2}
			\centering 
			\fontsize{8}{10}\selectfont    
			\begin{tabular}{c|ccccc} \hline
				$k$ & $|\mu_{1;k}-\mu_1|$ & $|\lambda_{1;k}-\lambda_1|$ & 
				$\eta_1(\mu_{1;k},\lambda_{1;k},x_{1;k})$ & 
				$(c_{1,k},c_{2,k})$ & $|a_{12,k}|$\\ \hline
				0	&  1.0e0    &  1.0e0     & 3.2e-1  &(-1.0e0,  7.9e-1)  & 9.3e-2    \\
				1	&  3.3e-1   &  4.6e-1    & 3.1e-1  &(-9.6e-1, 9.6e-1 )  & 3.2e-2    \\
				2	&  5.0e-2   &  9.0e-2    & 1.3e-1  &(-1.0e0,  1.0e0 )  & 2.4e-4   \\
				3	&  5.2e-4  &  3.3e-4    & 8.1e-3  &(-1.0e0,  1.0e0 )  & 3.2e-9   \\ 
				4   &  3.8e-10  &  2.2e-11	 & 2.1e-6  &(-1.0e0,  1.0e0 )  & 8.0e-16   \\
				5	&  4.2e-16  &  2.2e-16  & 2.5e-16 &(-1.0e0,  1.0e0 )  & 4.6e-16   \\
				\hline
			\end{tabular}\vspace{0cm}
		\end{table}
		
		In \cite{2DEVPIII}, we will prove 
		that the 2DRQI locally quadratically converges to 
		a 2D-eigentriplet $(\mu_*, \lambda_*, x_*)$ 
		We will see that though the algorithm and local quadratic convergence rate 
		are the same regardless the multiplicity of the eigenvalue $\lambda_*$ 
		of $A - \mu_* C$, convergence analysis needs to be treated 
		differently as indicated by whether $|a_{12,k}|$ approaches to 0. 
} \end{example}

%-------------------------

%\newpage 
\begin{example}{\rm 
		We use Algorithm~\ref{alg:minmax2RQs} to solve 
		the RQminmax \eqref{prob:minmaxRQ} 
		arising from a MIMO relay precoder design problem in signal 
		communication, and compare with an algorithm proposed in \cite{GH2013}.
		
		The MIMO relay precoder design problem is to minimize
		the total relay power subject to SINR constraints 
		at the receivers~\cite{Chalise2007}. 
		Consider the multi-point to multi-point communication 
		with two sources. The signals $r_o$ after MIMO relay processing 
		and signals $y$ received by destinations are
		\[ 
		r_o=ZH_{\rm up}s+Zn_r 
		\quad \mbox{and} \quad
		y=H_{\rm dl}^HZH_{\rm up}x+H_{\rm dl}^HZn_r+n_d,
		\] 
		where 
		$s$ is the transmit signals of the sources, 
		$n_r$ and $n_d$ are zero-mean circularly symmetric 
		complex Gaussian random variables with variance 
		$\sigma_r^2$ and $\sigma_d^2$.
		$H_{\rm up} = [h_1,\, h_2] \in \mathbb{C}^{m\times 2}$
		denotes channels between two sources and antennas, 
		$H_{\rm dl}= [g_1, \, g_2] \in \mathbb{C}^{m\times 2}$
		denotes channels between antennas and two destinations,
		$m$ is the number of antennas at the relay.
		$Z\in\mathbb{C}^{m\times m}$ is the MIMO relay processing matrix 
		to be designed. 
		Under the assumption that the source transimit signals $s$ are 
		zero-mean, statistically independent with the unit power, 
		the goal of the MIMO relay precoder design is to minimize the relay power 
		while maintaining SINR no less than a prescribed threshold $\gamma_{\rm th}$. 
		
After some algebraic manipulations, 
the MIMO precoder relay design problem becomes solving the following 
homogeneous quadratic constrained programming (HQCQP):
		\begin{equation} \label{prob_P1}
			\min_{u} u^HTu  \quad \text{s.t.} \quad
			u^HP_iu+1 \le 0 \quad \text{for} \quad i=1,2,
		\end{equation}
		where 
		$u = \vvec(Z)$ is a vector of length $n=m^2$, 
		$T = \widehat{F}_0 \otimes I$, 
		$P_1 = \widehat{F}_1 \otimes g_1 g^H_1$ and 
		$P_2 = \widehat{F}_2 \otimes g_2 g^H_2$ are
		of dimensione $n=m^2$, with
\begin{align*} 
\widehat{F}_0 & = \overline{h}_1h_1^T+\overline{h}_2h_2^T+\sigma_r^2I, \\  
\widehat{F}_1 & = \frac{1}{\gamma_{\rm th}\sigma_d^2}
		\left(\gamma_{\rm th}\overline{h}_2h_2^T+
		\gamma_{\rm th}\sigma_r^2I-\overline{h}_1h_1^T\right), \\  
\widehat{F}_2 & = \frac{1}{\gamma_{\rm th}\sigma_d^2}
		\left( \gamma_{\rm th}\overline{h}_1h_1^T+
		\gamma_{\rm th}\sigma_r^2I-\overline{h}_2h_2^T \right).
\end{align*} 
Note that $\widehat{F}_0$ and $\widehat{F}_i$ are $m\times m$ 
Hermitian matrices with $\widehat{F}_0$ positive definite.
Gaurav and Hari \cite{GH2013} show that 
the HQCQP \eqref{prob_P1} is equivalent to
the RQminmax~\eqref{prob:minmaxRQ} of the matrices 
\begin{equation} \label{eq:ACminmaxRQ1} 
A = S^{H}P_1S = F_1\otimes g_1g_1^H, \quad
B = S^{H}P_1S = F_2\otimes g_2g_2^H,
\end{equation} 
where $S=T^{-\frac{1}{2}}$ is the square root of $T^{-1}$,
$F_1 = \widehat{F}_0^{-\frac{1}{2}}\widehat{F}_1\widehat{F}_0^{-\frac{1}{2}}$ 
and 
$F_2 = \widehat{F}_0^{-\frac{1}{2}}\widehat{F}_2\widehat{F}_0^{-\frac{1}{2}}$. 
		We note that by exploiting the structure of $A$ and $B$, 
		the matrix-vector multiplications $Ax$ and $Bx$ 
		can be performed efficiently. 
		
		Algorithm~\ref{alg:minmax2RQs} 
		first checks the Cases-I and II of 
		the RQminmax~\eqref{prob:minmaxRQ}
		described in Theorem~\ref{thm:classfication1}  
		for possible early exit. 
		Then it uses a combination of the 2DRQI and the bisection search
		to find an optimizer $\mu_{*}^{(\rm RQI)}$ of 
		the EVopt~\eqref{eq:EVopt0} for the general Case-III.
		
		A dichotomous method is
		proposed in \cite{GH2013} for solving the EVopt~\eqref{eq:EVopt0}.
		Starting from a search interval $[a,b]$ containing the global maximum 
		of the concave function $g(\mu) = \lambda_{\min}(A-\mu C)$, 
		where $C = A - B$, the dichotomous method 
		compares $g(a)$, $g(b)$, $g(\frac{a+b}{2}-\epsilon_r)$
		and $g(\frac{a+b}{2}+\epsilon_r)$ 
		for a small scalar $\epsilon_r$, and then 
		by using the concavity of $g(\mu)$, replaces $a$ 
		with $\frac{a+b}{2}-\epsilon_r$, or 
		$b$ with $\frac{a+b}{2}+\epsilon_r$ for the next iteration. 
		When the search interval width $b-a$ is less than 
		a prescribed tolerance {\tt tol}, it returns an approximate 
		optimal value $\mu_{*}^{(\rm Dich)} = \frac{a+b}{2}$.  
		
		For numerical experiments described in \cite{GH2013},
		$H_{\rm up}$ and $H_{\rm dl}$ are complex Gaussian random matrices.
		The SINR is set to 3dB and noise variances are set to -10dB, i.e., 
		$\gamma_{\rm th}=10^{\frac{3}{10}}$, and $\sigma_d^2=\sigma_r^2=10^{-1}$.
		
		We observed that the optimizers of
		the EVopt \eqref{eq:EVopt0} on the interval $[0,1]$ 
		computed by the dichotomous method with 
		{\tt tol} = {\tt 1e-8} 
		and Algorithm~\ref{alg:minmax2RQs}
		with {\tt backtol} = $n\epsilon$ for the 2DRQI
		and {\tt reltol} = {\tt 1e-8} agree up to 8 sigificant digits:
		${|\mu_{*}^{(\rm RQI)} - \mu_{*}^{(\rm Dich)}|}/{|\mu_{*}^{(\rm Dich)}|}
		\leq {\tt 1e-8}$ for 20 runs of each of 
		dimensions $n=10^2, 100^2, 200^2, 400^2$. 
		
		The third column of Table~\ref{table:dist} reports 		the average runtime (in seconds) of 100 runs of the dichotomous method for finding the optimizer of the EVopt \eqref{eq:EVopt0} on the interval $[0,1]$ with the accuracy {\tt tol} = {\tt 1e-4}. The fifth column of Table~\ref{table:dist} reports the average runtime of 100 runs of Algorithm~\ref{alg:minmax2RQs} with {\tt reltol} = {\tt 1e-8} and {\tt backtol} = $n\epsilon$, excluding the lines~\ref{step:A1} and~\ref{step:A2} of Algorithm~\ref{alg:minmax2RQs} for checking the Cases-I and II. 
		
		\begin{table}[htbp]  %[H]
			\caption{
				Performance of the dichotomous method and Algorithm~\ref{alg:minmax2RQs} 
				for solving the EVopt \eqref{eq:EVopt0}. 
			} \label{table:dist}
			\centering
			\begin{tabular}{|c||cc||cc|} \hline
				& \multicolumn{2}{c||}{Dichotomous method}  
				& \multicolumn{2}{c|}{Algorithm~\ref{alg:minmax2RQs}} \\ 
				$n = m^2$ & \multicolumn{1}{c}{niter}  & 
				\multicolumn{1}{c||}{runtime} & \multicolumn{1}{c}{niter} & 
				\multicolumn{1}{c|}{runtime} \\ \hline
				$10^2$ & 15   & 0.11 & {3.1}  & {0.026}  \\ 
				$100^2$ & 15  & {1.2}  & 2.6 & {0.19}   \\ 
				$200^2$ & 15  & {4.6}   & 2.4 & {0.57}  \\ 
				$400^2$ & 15  & {29}    & 2.1 & {3.6}  \\ \hline	
			\end{tabular}
		\end{table}
		
		The significant performance gain of Algorithm~\ref{alg:minmax2RQs}
		in speed is due to the reduction of the number 
		of iterations shown in the ``niter'' columns of Table~\ref{table:dist}, 
		and the fact that each iteration of the dichotomous method 
		needs to solve two eigenvalue problems of $A - \mu_i C$ for 
		computing $g(\mu_i) = \lambda_{\min}(A-\mu_i C)$, where we use
		the sparse eigensolver {\tt eigs}.   
		In contrast, each iteration of Algorithm~\ref{alg:minmax2RQs}, 
		calls the 2DRQI (Algorithm~\ref{alg:2dRQIps}) once,  
		which in turn only needs to solve the linear system~\eqref{2DRQIaugmen},
		where we use the linear solver {\tt gmres}. 
}\end{example} 

%----------------------------------------------------------
%\newpage
\begin{example} \label{eg:case4} 
	{\rm
		The  purpose of this example is to show that 
		Algorithm~\ref{alg:dtiby2DRQI} 
		is more efficient than 
		recently proposed subspace method \cite{Kangal2018}
		for large scale DTI computation. 
		
		An $n\times n$ Orr-Sommerfeld matrix from
		finite difference discretization of
		the Orr-Sommerfeld operator for planar Poiseuille flow
		is of the form\footnote{The formulation
			in \cite{dinverse1999,Kangal2018} has some typos.}:
		\[
		\widehat{A}_n = L_n^{-1}B_n,
		\]
		where 
		$L_n = (1/h^2){\rm tridiag}(1,-(2+h^2),1)$,
		$B_n = \frac{1}{\mathcal{R}_e}L_n^2-{\tt i}(U_nL_n+2I)$
		and $U_n = \diag(1-u_1^2,\cdots,1-u_n^2)$. 
		$h= 2/(n+1)$ is the stepsize of discretization, $u_k = -1+kh$,
		$\mathcal{R}_e$ is the Reynolds number 
		($\mathcal{R}_e = 1000$ in numerical experiments) 
		and ${\tt i} = \sqrt{-1}$.
		The stability of the Orr-Sommerfeld matrices has been extensively 
		studied \cite{drazin2004,Malyshev1999,Reddy1993}. 
		It is known that the eigenvalues of Orr-Sommerfeld matrices are 
		highly sensitive to perturbations. The DTI is an important measure 
		of the stability under 
		perturbation~\cite{Freitag2011Aaa,dinverse1999,Kangal2018}.
		
		To apply Algorithm~\ref{alg:dtiby2DRQI}  for
		computing the DTI of $\widehat{A}_n$,
		we need to solve the linear equation \eqref{2DRQIaugmen} 
		in the 2DRQI. For computational efficiency, we first transform 
		the Jacobian $J(\mu_k,\lambda_k, x_k)$ into 
		a banded arrow matrix~\cite[p.\,86]{chen2005} through 
		a permutation, and then apply a Schur complement technique 
		\cite[p.406]{nocedal2006}.
		%use MATLAB operator {\tt ``backslash''} for solving linear systems.}
	
	For the initial $(\mu_0, \lambda_0, x_0)$ of the 2DRQI, 
	we apply the Cayley-Arnoldi algorithm with complex shift
	for computing $\mu_0$~\cite{Meerbergen1996}, and then 
	use MATLAB's {\tt svds} to compute
	the smallest singular triplet of the matrix $\widehat{A}-\mu_0{\tt i}I$.
	We set {\tt reltol} = {\tt 1e-9} and ${\tt tol} = n\epsilon$.
	
	%------------
	
	A subspace method \cite{Kangal2018} for eigenvalue optimization 
	is recently applied for computing DTI $\beta(\widehat{A}_n)$ 
	based on the singular value minimization:
	\begin{equation}\label{eq:subspace:singularvalue}
		\beta(\widehat{A}_n) 
		= \min_{\mu \in \mathbb{R}} \sigma_{\min}(\widehat{A}_n-\mu {\tt i}I).
	\end{equation}
	With a prescribed search interval $[a,b]$ and 
	an initial $\mu_0\in[a,b]$, the subspace method 
	first computes $\sigma_{\min}(\widehat{A}_n-\mu_0 {\tt i}I)$ and
	the corresponding right singular vector $v_0$ and 
	then sets the initial projection subspace $V_0 = v_0$. 
	At the $k$-th iteration for $k \geq 1$, 
	the subspace method projects 
	the minimization~\eqref{eq:subspace:singularvalue} onto 
	the subspace $V_{k-1}$ and solves the reduced problem: 
	\begin{equation} \label{eq:dtisubp} 
		\sigma_{\min}^{(k)} =
		\min_{\mu\in [a,b]}\sigma_{\min}(\widehat{A}_nV_{k-1}-\mu {\tt i}V_{k-1}).
	\end{equation} 
	With a minimizer $\mu_k$ of the reduced problem \eqref{eq:dtisubp},
	the subspace method computes 
	$\sigma_{\min}(\widehat{A}_n-\mu_k {\tt i}I)$
	and the corresponding right singular vector $v_k$, and then
	updates the projection subspace
	$V_k = \mbox{Orth}(v_{k-1},v_k)$.
	The iteration terminates when
	$\sigma_{\min}^{(k-1)}-\sigma_{\min}^{(k)}< {\tt tol}$
	for a prescribed {\tt tol}, or the number of iterations exceeds $\sqrt{n}$.
	
	{\tt leigopt} is an implementation of the 
	subspace method in MATLAB~\cite{Kangal2018}\footnote{
		\url{http://home.ku.edu.tr/\textasciitilde emengi/software/leigopt}, 
		downloaded on October 2, 2021. 
	}. 
	%\Blue{When using {\tt leigopt}, we use the same trick as in \cite{Kangal2018} 
		%for exploiting the special structure of $\widehat{A}_n$ in 
		%solving linear systems.} 
	To improve computational efficiency, the following minor modifications 
	are made in {\tt leigopt}. 
	(1) We set the dimension of the projection subspace {\tt opts.p = 20} in 
	{\tt eigs} or {\tt svds}, instead of {\tt round(sqrt($n$))} used 
	in {\tt leigopt}. It is observed significant reduction in computational cost. 
	(2) {\tt leigopt} uses {\tt eigopt}, a quadratic supporting functions based 
	method \cite{MYK2014}, to solve the reduced problem~\eqref{eq:dtisubp}. 
	For the Orr-Sommerfeld matrices, {\tt eigopt} 
	is too time consuming.  Instead, we use a modified 
	Boyd-Balakrishnan method \cite{boyd1990}\footnote{This strategy is also 
		recommended by Mengi, one of the authors of {\tt leigopt} 
		in a private communication.}. 
	For numerical experiments, the search interval of {\tt leigopt} is set to 
	$[a,b]= [-60,60]$, the initial $\mu_0=0$ and the tolerance 
	{\tt tol} = {\tt 1e-12}. 
	
	Table~\ref{table3} shows the performance of Algorithm~\ref{alg:dtiby2DRQI} 
	and the subspace method.  The runtime of Algorithm~\ref{alg:dtiby2DRQI} 
	is written as $t_1+t_2$ with $t_1$ for calculating the rightmost 
	eigenvalue of $\widehat{A}_n$ and the singular triplet of 
	$\widehat{A}_n - \mu_0 {\tt i} I$ (i.e., lines 1 and 2 of 
	Algorithm~\ref{alg:dtiby2DRQI}), and $t_2$ for the rest of calculation. 
	The runtime of the subspace method is written 
	as $t_{\rm all} (t_{\rm sub})$ with 
	$t_{\rm all}$ for the total time and $t_{\rm sub}$ 
	for solving the subproblems \eqref{eq:dtisubp}. 
	
	We observe that the computed $\widehat{\beta}(\widehat{A}_n)$ by
	two algorithms agrees from 4 to 8 significant digits. 
	%Note that only 4 significant digits are reported in \cite[Table~7]{Kangal2018}.
	However, Algorithm~\ref{alg:dtiby2DRQI} uses no more than half of 
	the runtime of the subspace method. The speedup of Algorithm~\ref{alg:dtiby2DRQI} comes from two-fold. Algorithm~\ref{alg:dtiby2DRQI} uses less iterative steps. The major cost of the subspace method is on computing the right singular vector $v_k$ corresponding to $\sigma_{\min}(\widehat{A}-\mu_k{\tt i}I)$. In contrast, in Algorithm~\ref{alg:dtiby2DRQI}, we only need to solve a linear equation of the form~\eqref{2DRQIaugmen} in each iteration of 2DRQI (Algorithm~\ref{alg:2dRQIps}).
	
	\begin{table}[htbp]   %[H]
		\caption{DTI computation by the subspace method and 
			Algorithm~\ref{alg:dtiby2DRQI}.} \label{table3}
		\begin{center} 
			\begin{tabular}{|l||lll||lll|} \hline
				&\multicolumn{3}{c||}{The subspace method}     
				& \multicolumn{3}{c|}{Algorithm~\ref{alg:dtiby2DRQI}}   \\  
				$n$ &  \multicolumn{1}{c}{niter} & \multicolumn{1}{c}{runtime} 
				& \multicolumn{1}{c||}{$\widehat{\beta}(\widehat{A}_n)$} & 
				\multicolumn{1}{c}{niter} 
				& \multicolumn{1}{c}{runtime} & 
				\multicolumn{1}{c|}{$\widehat{\beta}(\widehat{A}_n)$} \\ \hline
				1000 &  9.7 &  0.16(0.013) & 1.97789572460e-3 & 
				5.8 &  $0.025 + 0.032$ & 1.9778957275e-3 \\ 
				4000 &  9.7 &  0.44(0.017) & 1.97809438632e-3 & 
				4.9 & $0.062 + 0.095$ & 1.9780964583e-3 \\ 
				16000 & 8.9 &  1.53(0.035) & 1.93786346536e-3 & 
				4.8 & $0.25\ \, + 0.38$ &  1.9376706543e-3 \\ \hline	
			\end{tabular}
		\end{center} 
	\end{table}
	
	We note that the validation step for computed 
	$\widehat{\beta}(\widehat{A}_n)$ by  
	Algorithm~\ref{alg:dtiby2DRQI} and the subspace method 
	is not reported in Table~\ref{table3}. 
	For the matrix size $n=1000$, it is verified that both algorithms 
	pass the validation procedure described in Section~\ref{sec:dti}. 
	Although there exists an algorithm \cite{csparse2006} for checking whether 
	$G(\lambda)$ defined in Lemma~\ref{lem:dtitest} has 
	pure imaginary eigenvalues, it would be too expensive for large matrix
	sizes.  As a common practice of existing 
	algorithms \cite{Freitag2011Aaa,dinverse1999,Kangal2018,ahownear1985}, 
	there is no validation procedure for large scale DTI calculation. 
} \end{example}

%-----------------------------------------------------

\section{Concluding remarks}\label{sec:conclusion}
Based on the theoretical results presented in Part I of this paper~\cite{2DEVPI},  
we devised an RQI-like algorithm, 2DRQI in short, for solving
the 2DEVP \eqref{2deig}. The computational kernel of the 2DRQI 
is on solving a linear systems of equation. The efficiency
of the 2DRQI is demonstrated for solving large scale 2DEVP arising from
the minmax problem of two Rayleigh quotients and
the computation of the distance to instability 
of a stable matrix. A rigorous convergence analysis 
of the proposed 2DRQI will be presented in the third part of this paper. 

%-----------------------------------------------------

\bibliographystyle{siamplain}
\bibliography{2devp}
\end{document}